\newcommand{\defeq}{\mathrel{\mathop:}=}
\newcommand{\eqdef}{\mathrel{\mathop=}:}
\newcommand{\mapto}{ \xrightarrow[]{}}
\newcommand{\charteq}{\overset{\cdot}{=}}
\newcommand{\eqcoord}{\dot{=}}
\newcommand{\hrz}{\mathsf{h}} 
\let\div\relax 
\DeclareMathOperator{\div}{div}
\newcommand{\dR}{\mathbb{R}}
\newcommand{\Prob}{\mathbb{P}}
\newcommand{\dE}{\mathbb{E}}
\newcommand{\ssubset}{\subset\joinrel\subset}
\newtheorem{theorem}{Theorem}[section]
\newtheorem{proposition}[theorem]{Proposition}
\newtheorem{lemma}[theorem]{Lemma}
\theoremstyle{definition}
\newtheorem{definition}[theorem]{Definition}
\newenvironment{assumptionp}[1]{
  
  \assumptionalt
}{\endassumptionalt}
\theoremstyle{remark}
\newtheorem*{remark}{Remark}
\theoremstyle{definition}
\newcommand*\samethanks[1][\value{footnote}]{\footnotemark[#1]}
\title{Strategic geometric graphs through mean field games}
\author{Charles Bertucci\thanks{CMAP, Ecole Polytechnique, UMR 7641, 91120 Palaiseau, France.}, Matthias Rakotomalala\samethanks}
\date{}
\begin{document}

\maketitle

\begin{abstract}
    We exploit the structure of geometric graphs on Riemannian manifolds to analyze strategic dynamic graphs at the limit, when the number of nodes tends to infinity. This framework allows to preserve intrinsic geometrical information about the limiting graph structure, such as the Ollivier curvature. After introducing the setting, we derive a mean field game system, which models a strategic equilibrium between the nodes. It has the usual structure with the distinction of being set on a manifold. Finally, we establish existence and uniqueness of solutions to the system when the Hamiltonian is quadratic for a class of non-necessarily compact Riemannian manifolds, referred to as manifolds of bounded geometry. 
\end{abstract}

\tableofcontents

\section{Introduction}
In this paper, we exploit the structure of geometric graphs on Riemannian manifolds to analyze strategic dynamic graphs at the limit, when the number of nodes tends to infinity. Namely, we propose a mathematical framework to study mean field games(MFG) in which the interaction of the agents depends on a network structure, which is itself the result of strategic interactions of the players, this will lead us to sturdy MFG systems on Riemannian manifolds.

MFG theory, introduced by Lasry and Lions \cite{lasry2007mean}, has proven to be a fruitful approach for qualitative modeling of games with a crowd of agents, such as the ones encountered in economics, finance, traffic flow, or biology \cite{carmona2020applications,cousin2011mean}. We refer to the lectures of Pierre-Louis Lions at the \textit{Collège de France}, and to the book \cite{carmona2018probabilistic} for a comprehensive introduction.
MFG in which there exists an additional structure of interaction have recently attracted attention. Caines and Huang \cite{caines2019graphon} combined graphon theory and mean field games to study games with weighted interaction between players. A \textit{graphon} is a measurable function $W$ from $[0,1]^2\mapsto [0,1]$, that represents the dense limit of a graph, where for a pair of nodes labeled by $x,y \in [0,1]$, $W(x,y)$ is the asymptotic weight of the edge between node $x$ and node $y$. Graphons are used to weigh the interaction of each agent with the rest of the crowd depending on its label, this is referred to as \textit{graphon mean field games} and there is an ongoing literature on this topic \cite{gao2020linear,lacker2023label, carmona2022stochastic, aurell2022stochastic, aurell2022finite,bayraktar2023graphon}. A different approach was proposed by Lacker and Soret in \cite{lacker2022case}, where they studied the mean-field game limit of a flocking model on \textit{transitive} graph, in this framework, it appears that the optimal control depends on the spectrum of the graph. In the previous models, the additional interaction structure, is exogenous to the game, \textit{i.e.} it is given at the beginning of the game and is, for the most part, fixed. This is mainly due to the fact that there is no \textit{a priori} canonical way to model the attachment decision of the players at the limit.

In our opinion, it is natural to consider a dynamic structure of interaction that depends on the interaction of the players themselves. In many applications, the agent controls how it interacts with the others. For example, the \textit{Lightning Network} is a decentralized payment protocol, and each pair of agents can choose to open a payment channel between them \cite{poon2016bitcoin}. Another example would be problems of mutual holding, where each bank can buy shares of the others; a mean-field approach to this problem was used by N. Touzi, F. Djete and L. Bassou in \cite{djete2021mean, djete2023mean, bassou2024mean}. While their study provides valuable insights, we provide here a modeling framework that allows to deal with other strategic structures of interaction.

The main modeling assumption we make here is that the graph that describes the interactions is a Riemannian geometric graph and that each node corresponds to a player. A geometric graph is a weighted graph, where the nodes are identified as a collection of points on a Riemannian manifold, and the edges and their weights depend uniquely on the distance of the pair of points on the manifold. We shall see that we can identify, in the limit, a geometric graph with a measure on a Riemannian manifold. There is a wide literature on geometric graphs, both in a theoretical perspective to study their properties \cite{penrose2003random,van2016random,barthelemy2011spatial,aiello2001random}, and in applications.  Krivoukov \textit{et al.} \cite{krioukov2010hyperbolic} proposed the hyperbolic space as an appropriate base space to embed real-life networks, they showed that the degree distributions of the nodes and clustering properties present similarities with network structures that can be observed in practice.
These properties motivated statistical studies, for example, in \cite{garcia2016hidden}, the authors studied the international trade maps from 1870 to 2013, and highlighted the possible hidden hyperbolic structure. Similarly in \cite{keller2021hyperbolic}, statistical properties of the European banking system point towards a hyperbolic structure. 

In our work, the geometry of the manifold that we impose to the structure of interaction is a modeling hypothesis. The main advantage of this approach is that it is now clear how to interpret the strategy of connection in this model. To change its interaction, a player simply changes its position on the Riemannian manifold and the underlying graph evolves accordingly. Note that the decision of the players depends on the position of the other players on the underlying manifold, in a usual MFG manner. Hence, the equations we shall naturally consider are of the following form,

{\small
\begin{equation*}
    \begin{cases*}
        -\partial_t v - \Delta_g v - \sigma \Delta v + H(t, x, y, \nabla_x v, \mu_t) + \Tilde{H}(t,x, y, \nabla_g v, \mu_t) = 0 \text{ on } [0,T]\times \dR^d \times M,\\
        \partial_t \mu - \div_x( D_p H (t,x,\nabla v,\mu) \mu) - \div_g( D_p \Tilde{H} (t,x, y,\nabla_g v,\mu) \mu) - \sigma \Delta \mu - \Delta_g \mu   = 0   \text{ on } [0,T]\times \dR^d \times M,\\
        \mu_{t = 0} \equiv m_0, v(T, \cdot) \equiv G(\mu_T)  \text{ on } \dR^d \times M,
    \end{cases*}
\end{equation*}
}

where $\sigma > 0$, and  $\Delta_g$, $\div_g$, $\nabla_g$ are respectively the Laplace-Beltrami operator, the divergence and the gradient operator associated to the Riemannian metric $g$ on the manifold $M$, they are the geometrical counterparts of the classical differential operators. This type of system is the main subject of study in this paper and will be derived in Section 2. It describes the evolution of a crowd of agents, represented as a density $\mu$, on the state space $\dR^d\times M$, where $(M,g)$ is a Riemannian manifold. The classical state space for mean field games is $\dR^d$ or the d-dimensional torus, we thus here extend it with $M$, which will serve as an interaction space. In this model, the relative position of the agents on the manifold serves to describe their interactions. The solution to the Hamilton-Jaccobi-Bellman equation $v$, is the value function of the optimal control faced by a typical agent, given that it anticipates the flow $(\mu_t)_{t\in [0,T]}$. In \cite{yu2022computational}, a first-order mean field games system on compact Riemannian manifolds was numerically studied, here we study a system of the second order.

An important remark is that the product of Riemannian manifold is a Riemannian manifolds, and since $\dR^d$ is a Riemannian manifold, we can study without loss of generality the system,
\begin{equation*}
    \begin{cases*}
        -\partial_t v - \Delta_\eta v + H(t, x, y, \nabla_\eta v, \mu_t) = 0\text{ on } [0,T]\times \Tilde{M},\\
        \partial_t \mu - \div_\eta( D_p H (t,x, y,\nabla_\eta v,\mu) \mu) - \Delta_\eta \mu   = 0 \text{ on } [0,T]\times \Tilde{M},\\
        \mu_{t = 0} \equiv m_0, v(T, \cdot) \equiv G(\mu_T)  \text{ on } \Tilde{M},\\
    \end{cases*}
\end{equation*}

where $\Tilde{M} = \dR^d \times M$ equipped with the product metric $\eta$. Thus in the following study, we will study the second system.

In Graphon mean-field games, one considers general uncontrolled weighted interactions between different players. In our approach, we restrict and parameterize the interaction structure through the underlying geometry of a Riemannian manifold, enabling the study of controlled dynamic interaction structures. As we shall see in Section 2, it also has the great benefit of preserving, at the limit, local geometrical information of the discrete structure, such as the Ollivier curvature. Furthermore, in many model cases, we can explicit the equation in a global coordinate system, for example, the hyperbolic case, which is a base space with interesting characteristics.

In this paper, we formulate mean-field games with a Riemannian-based interaction structure and analyze the smooth coupling case with a quadratic Hamiltonian on a manifold of bounded geometry. The technical challenges arise from the potential non-compactness of the manifold, requiring control over the behavior at infinity, particularly to obtain a comparison principle for the Hamilton-Jacobi equation on the manifold. We address this by using the parabolic results of H. Amann \cite{amann2016parabolic}, the existence and uniqueness results for stochastic differential equations from \cite{rakotomalala2024practical}, and our comparison principle. This last part depends on the existence of a family of supersolutions to an elliptic problem on the non-compact manifold, which we show holds if the manifold is of bounded geometry.

In Section 2, we introduce mean field geometric graphs, that is, limits of dynamic geometric graphs, derive the forward-backward system, corresponding to a strategic interaction, similarly as in mean field games. We also recall Ollivier coarse curvature and its convergence in the case of geometric graphs, and provide some toy models. In the third section, we establish the existence and uniqueness of second-order mean field games on manifolds of bounded geometry in the case of a quadratic Hamiltonian. This class of Riemannian manifolds encompasses compact and a wide range of non-compact manifolds, for example, the hyperbolic space, and the Euclidean $\dR^2$ classical case. Overall the strategy of proof is classical and uses a standard fixed point argument. The main novelties reside in the estimates that depend on the curvature of the manifold, we also provide proof for a maximum principle on unbounded domain for manifolds of bounded geometry needed for the analysis.

\section{Mean field geometric graph games}
In this section, we begin by recalling the notion of a geometric graph. We then define our notion of mean field geometric graph. Next, we derive the forward-backward system that describes the strategic evolution of a mean-field geometric graph. Afterward, we recall the notion of Ollivier coarse curvature and discuss its potential application in this context. Finally, we explicitly outline the equations in the model case of hyperbolic geometry, which appears suitable for certain applications.

\subsection{Strategic geometric graphs and their limits}

We begin with the usual notion of finite geometric graph.

\begin{definition}[Geometric Graph]
    Let $(\mathcal{X},d)$ be a metric space and $I = \{1,\ldots,N\}$ for some $N\in \mathbb{N}$. A finite \textit{geometric graph} on $\mathcal{X}$ is a \textit{weighted graph} $G = (V, E, w)$, associated to a triplet,
    \begin{equation*}
        ((\mathcal{X},d),\{X^i \in \mathcal{X}\}_{i \in I}, k:\dR_+\cup \{+\infty\}\xrightarrow[]{}\dR_+\cup \{+\infty\}),
    \end{equation*}
    where $V = \{X^i\}_{i\in I}$ is a collection of points taking values in $\mathcal{X}$ (the \textit{base space}),and the \textit{connecting rule} $k$ determines the edge and their weights through,
    \begin{equation*}
        E = \{(i,j)\in I^2 | k(d(X^i,X^j)) < +\infty\}, \text{ and } w_{ij} = k(d(X^i,X^j)) \text{ for } (i,j) \in E.
    \end{equation*}
\end{definition}
It is common in the geometric graph literature \cite{barthelemy2011spatial,aiello2001random} to consider connecting rules $k^\epsilon$ of the following form,
\begin{equation}
    \label{eq:DefKEps}
    k^\varepsilon(r) =  \begin{cases}
                                r \text{ if } r \leq \epsilon,\\
                                +\infty \text{ otherwise, }
                        \end{cases}
\end{equation}
that is, where each node is connected to its $\varepsilon$-neighbourhood with edge length coinciding with the base space distance.


Since we are ultimately concerned with the case of large graphs, we are lead to consider mean field limits of such finite geometric graphs. In the mean field limit, it is natural to assume that the limit graph is given by the probability measure which is obtained as the limit of the empirical measures in the finite geometric graphs. Of course, a rescaling of the connecting rule is necessary to obtain a mean field limit, but we do not enter too much in this question here, as we are more interested in the limit object and leave the question of the passage to the limit for future research. More generally, since our main concern lies in the dynamics of such graphs the connecting rule shall be of few interest for as we are going to assume that is is fixed. We shall come back on this later on.

\begin{definition}[Asymptotic Geometric Graphs]
    Given a certain connecting rule, a limit geometric graph on a base space $(\mathcal{X}, d)$ is a probability measure(or a measure) on the base space $\mu \in \mathcal{P}(\mathcal{X})$, that is the asymptotic distribution of nodes on the base space. 
\end{definition}

It could be possible to consider \textit{Gromov-Haussdorf topology} in this framework to study the limit of the graph, however, in the next example and the following sub-section, we will consider examples of scalar quantities depending on the geometry of the graph sequence, that converges at the limit, both in the sparse (when the rescaling of the connecting rule gives a constant number of neighboring nodes) and dense regimes (when the number of neighbor grows as the total number of nodes). We give a first example of \textit{dense limit}, inspired by the graphon literature.


\subsubsection{Example 1: modeling interactions with geometric graphs}

In mean field games models, given a crowd of agents identified as their state variables  $(Y^{N,i})^N_{i=1}$ in $\dR^d$, we are somtimes led to consider aggregated quantities of the form 

\begin{equation*}
    \frac{1}{N}\sum_{i=1}^N f(y,Y^i),
\end{equation*}

where $f : \dR^d \times \dR^d \mapto \dR$ is some interaction kernel. Now suppose, that we have an additional structure, in the form of a weighted graph $G = (V =\{1,\ldots,N\},E,w)$, in a similar manner it is natural to consider for an agent $i = 1, \cdots, N$, the average over the neighbors,
\begin{equation*}
    \frac{1}{\#\mathcal{N}_i} \sum_{(i,j) \in E} f(y,Y^j)w_{ij},
\end{equation*}
where $\#\mathcal{N}_i$ is the cardinal of the neighbors of $i$. It is classical, in graphon theory to introduce the weighted average, conserved under graphon convergence at the limit.
In the context of geometric graph, we consider the \textit{extended state space} $\dR^d \times \mathcal{X}$, and for a crowd $((Y^i,X^i))^N_{i=1}$. Where $Y^i$ is the classical state variable and $X^i$ is the position of the node on the manifold. This would yield a quantity of the form,
\begin{equation*}
    \frac{1}{N}\sum_{j=1}^N f(y,Y^j)\lambda^N(d(x,X^j)),
\end{equation*}
for some function $\lambda^N : \dR_+ \mapto \dR$, essentially something like $1/x$, or a possibly renormalized version(by $\frac{1}{N}\sum_{j=1}^N\lambda(d(x,X^j))$). This is the averaged quantity over the neighbors for an agent in the state $(y,x)$ given the positions of the others.
At the limit, if we suppose that we have a convergence to some probability $\nu \in \mathcal{P}(\dR^d\times \mathcal{X})$, depending on the renormalization, this could lead to,
\begin{equation*}
    \int_{\dR^k\times \mathcal{X}} f(y,y') \lambda\big(d(x,x')\big)d\nu(x', y'),
\end{equation*}
for non-renormalized dense limit, or 
\begin{equation*}
    \int_{\dR^k} f(y,y')d\nu_{Y|X=x}(y') = \dE_\nu [f(y,Y)|X = x],
\end{equation*}
for a renormalized sparse limit.


\subsection{Ollivier coarse curvature of mean field geometric graphs}
In Riemannian geometry, curvature is an analytic quantity that characterizes local and global geometric properties of the manifold. Ollivier gave a synthetic definition of curvature \cite{ollivier2009ricci}, that generalizes to metric-measured space as follows. We present this notion as we believe it is a natural object that mean field geometric graphs allow to consider.

\begin{definition}[Ollivier Coarse Curvature]
    Let $(\mathcal{X}, d, \mu)$ be a metric measured space, and assume that balls in $\mathcal{X}$ have finite measure and that $\text{supp} \mu = \mathcal{X}$. For some $\epsilon>0$, introduce the rescaled restriction of the measure to the $\epsilon$-ball around $x$, $m^\epsilon_x = \mu|_{B(x,\epsilon)}/\mu(B(x,\epsilon)).$
    
    The coarse Ollivier curvature is defined as,
    \begin{equation*}
        \kappa(x,y) \defeq 1 - \frac{W_1(m^\epsilon_x,m^\epsilon_y)}{d(x,y)},
    \end{equation*}
    where $W_1$ is the 1-Wasserstein distance or Monge-Kantorovich L1-cost(see in the appendix).
\end{definition}
One can note that, unlike Ricci curvature, the \textit{coarse} Ollivier curvature is not defined at a point, but rather between two points. In \cite{ollivier2009ricci}, Y. Ollivier establishes the coarse curvature counterpart theorems to the Bonnet-Meyer, Moser and Boltzman entropy theorems of Riemannian geometry, thus, showing that the coarse definition captures, similarly as in the Riemannian case, a variety of information about the intrinsic properties of metric-measured spaces.

If we consider a Riemannian manifold with its natural volume measure, we have for $\delta$ small enough, that, $\kappa(x,Exp_x\delta v) = \frac{\epsilon^2 \text{Ric}_x(v,v)}{2(N+2)} + o(\epsilon^3 + \epsilon^2 \delta)$, where $v \in T_xM$ is unit tangent vector, $\text{Ric}_x(v,v)$ is the Ricci curvature at $x$ in the direction $v$, $N$ is the dimension of the manifold and $Exp_x(\delta v)$ is the point at distance $\delta$ of $x$, along the geodesic issuing from $v$.

We can explicit the Ollivier curvature, in the case of a finite graph $(G=(V,E,w),d_G, \eta)$ equipped with the weighted shortest path metric, and the uniform volume measure on the nodes $\nu \in \mathcal{P}(V)$, $\eta = \frac{1}{\#V}\sum_{x \in V} \delta_x$, the coarse curvature between two connected nodes $x$ and $y$ is given by, 
\begin{equation*}
    \kappa_G(x, y) = 1 - W^1_{G}(\eta^{\epsilon}_{x},\eta^{\epsilon}_{y})/{d_{G}(x, y)}.
\end{equation*}

In Figure \ref{fig:OllivierCurvature}, we give three examples of local Ollivier curvature on the horizontal center edge.  

\begin{figure}
    \centering
    \includegraphics[width=.6\textwidth]{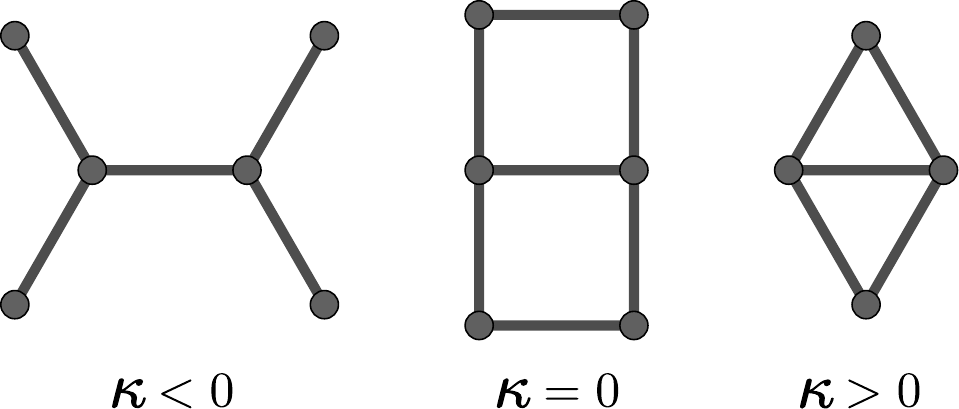}
    \caption{Coarse Ollivier curvature of the center horizontal edge of three caricature graphs, with $\varepsilon = 1$.}
    \label{fig:OllivierCurvature}
\end{figure}

We emphasize that this is an \textit{intrinsic local geometrical quantity}, that can be defined on any graph with no structure restriction. 
In recent studies,  Ollivier curvature has been used statistically to measure intrinsic properties of real-life networks; for example in \cite{sandhu2016ricci}, the average Ollivier curvature of the covariance matrix(identified as a function of agency matrix of a graph) of the stocks of the SnP500 shows to be an economic indicator that is negatively correlated with an hoarding phenomenon during crisis; as emphasized in the paper the average coarse Ollivier curvature proved to be as effective as the \textit{graph entropy}, but not like the former it is a local property of the graph, and measures locally the robustness of the network. One could find other applications in \cite{tannenbaum2015ricci} to discriminate cancerous cells, or in \cite{gosztolai2021unfolding} to identify clusters. 

Now, coming back to geometric graphs, the structure of the base space can be lifted and allows for the conservation of more goemetrical information than the one-to-one interaction in graphons theory; M. Arnaudon, X. Li, B. Petko, proved in \cite{arnaudon2023coarse} a convergence result of the rescaled Ollivier curvature in the case of geometric graphs. This result shows that, in the limit, the rescaled Ollivier curvature of a random geometric graph, drawn from $\mu$, at an ideal point $x$, will converge to the weighted Ricci curvature, 
\begin{equation*}
    \text{Ric}^\mu_{ij} = \text{Ric}^g_{ij} - \text{Hess}(\log(\mu))_{ij}.
\end{equation*}
We could thus incorporate in models the limiting curvature of an ideal node, as evoked previously this is similar to a sparse graph limit. Since the curvature is a $(0,2)-$tensor, we could consider the scalar curvature as a reward function instead, that is the contracted version with the covariant metric, this would give,
\begin{equation*}
    R^\mu \defeq g^{ij}\text{Ric}^\mu_{ij} =  R^g - \Delta_g\log(\mu),
\end{equation*}

where $R^g$ is the ambient scalar curvature of the base space manifold.

\subsection{Dynamic mean-field geometric graphs}

In numerous applications, the structure of interaction evolves over time, in the context of geometric graphs it is rather natural to define a trajectory in the space of graphs since trajectories of the nodes in the base space induce through the connection rule an evolution. We define \textit{stochastic geometric graphs} as follows. Given a filtered probability space, $(\Omega, \mathcal{F}_t, \Prob)$, with the filtration $\mathcal{F}$ that is $\Prob$-complete and a collection of $\mathcal{F}$-adapted stochastic processes $\{(X^n_t)_{t\in [0,T]}\}_{n\in I}$ taking values in $\mathcal{X}$, we can construct a stochastic geometric graph on $(\mathcal{X},d)$, by specifying a connecting rule $k$, and setting $V_t = \{X^n_t\}_{n\in I}$.

In order to define dynamical systems, we need a differential structure on the base space, thus to pursue the analysis further we restrict the metric structure to Riemannian manifolds. This still allows for a variety of geometries while enabling the use of (close-to-)classical tools of stochastic analysis, control theory, and partial differential equations. Even if we consider the base space geometry as a model specification, it is possible to restrict the study to Euclidean-geometrical graphs further, that is, to fix $\dR^d$ as the base space. However, we believe that the prism of differential geometry brings a well-suited specific treatment to the parametrization of the considered graphs. 

In order to study general situations, we want to allow the dynamics of the nodes to have a stochastic part. This naturally leads to the analysis of stochastic differential equations of Riemannian manifolds, which requires non trivial adaptation of the classical stochastic analysis theory.
It could be possible to only consider the partial differential equation associated with such stochastic differential equations. However we decided to present shortly the notion of \textit{rolling without sleeping} solution introduced by Eels and Elworthy \cite{eells1976stochastic}, since it is of great interest to study mean-field games on Riemannian manifolds from a probabilistic point of view, or simply to state the control problem of the players. This section is not required for the rest of the paper. Let $(M,g)$ be a d-dimensional Riemannian manifold, we want to define a stochastic process on $M$, associated with the equivalent of a drifted Brownian motion, that is associated with the second order operator $B\cdot \nabla_g + \Delta_g$, where $\Delta_g$ and $\nabla_g$ are respectively the Laplace-Beltrami and gradient operator. Given $f\in C^2(M)$, in charts it writes $B\cdot \nabla f + \Delta_gf \charteq b^i\partial_i f + g^{ij}\partial_{ij}f - g^{ij}\Gamma^k_{ij}\partial_kf$, where $b^i$ and $\Gamma^k_{ij}$ are respectively the coefficients of the vector field $B$ and the Christoffel symbol, using, as in the rest of the paper, the Einstein summation convention.

Formally, we want to push a process with white noise on a manifold, and define stochastic characteristics associated with the previous second-order operator. We need to specify how to map the noise onto the tangent space, \textit{a priori} there is no canonical way to define this notion, we recommend the article of Elworthy \cite{Elworthy1998}, which specifically addresses this conceptual difficulty. For the notion of \textit{rolling without sleeping} solutions, we need the following results from differential geometry,

\begin{definition}
    Let $OM$ denote the set of all orthonormal basis of the tangent space at each point of $M$,
    \begin{equation*}
        OM = \{ (x, E_1, \cdots, E_d) | x\in M, (E_1, \cdots, E_d) \text{ is an orthonormal basis of } T_xM  \},
    \end{equation*}
    and denote by $\pi : OM \mapto M$, the natural projection. Then, there exists a manifold structure on $OM$, that makes $(OM,\pi)$ a principal bundle over $M$,
    called the \textit{orthonormal frame bundle}. An element $u \in OM$, is identifiable as an isometry $u : \dR^d \mapto T_{\pi(u)}M$, for an element $\lambda \in \dR^d$, we will note $u\lambda = \sum_{i = 1}^d E_i \lambda^i \in T_{\pi(u)}M$.
\end{definition}

\begin{definition}[Horizontal Lift]
     A smooth curve $(u_t)$ taking values in $OM$, is said to be horizontal if for each $e \in \dR^d$ the vector field $(u_te)$ is parallel along the $M$-valued curve $(\pi(u_t))$. A tangent vector $Y \in T_uOM$ is said to be horizontal if it is the tangent vector of a horizontal curve at $u$. The space of horizontal vectors at $u$ is denoted by $H_uOM$; we have the decomposition
     \begin{equation*}
         T_uOM = V_uOM \oplus H_uOM,
     \end{equation*}
    where $V_uOM$ is the subspace of vertical vectors, that are tangent to the fibre $T_uOM$. It follows that the canonical projection $\pi$, induces an isomorphism $\pi_\hrz : H_uOM\mapto T_{\pi(u)}M$, and for each $B \in T_xM$ and a frame $u$ at $x$, there is a unique horizontal vector $B^\hrz$, the horizontal lift of $B$ to $u$, such that $\pi_\hrz(B^\hrz) = B$. Thus if $B$ is a vector field on $M$, then $B^\hrz$ is a vector field on $OM$.\\
    In coordinates $\{x^i,\zeta^j_k\}$, the lifted vector field writes,
    \begin{equation*}
         B^\hrz \eqcoord b^i(x)\frac{\partial}{\partial x^i} - \Gamma^k_{ij}(x)b^i(x)\zeta^j_m\frac{\partial}{\partial \zeta^k_m}.
    \end{equation*}
\end{definition}

Given $((\Omega, \mathcal{F}, \Prob), W_\cdot)$ a filtered probability space with the usual hypothesis, equipped with a $d$-dimensional Brownian motion. Let $\{e_i\}$ be the coordinate unit vectors of $\dR^d$, and $B \in C([0,T],C_b(T^1M))$ a regular vector field on $M$. For $u \in OM$, define $B^\hrz(t, u) =(B(t, \pi(u)))^\hrz$, and introduce $H_i : OM \mapto TM$ defined as $H_i(u) = ue_i$, we can consider processes $(U_\cdot)$ taking values in $OM$ solutions of  the differential equation,
\begin{equation*}
    \label{Eq:TOM}
    dU_t = \sum_{i=1}^dH^\hrz_i(U_t)\circ dW^i_t + B^\hrz(t,U_t)dt \text{ on } T_{U_t}OM, 
\end{equation*}

where $\circ$ indicates Stratonovich integral. In \cite{rakotomalala2024practical}, the second author gives an existence and uniqueness result for possibly stochastic drift and a regular diffusion coefficient on Riemanian manifolds of bounded geometry.

One can note that, since any element of $OM$ can be identified as a point $x$ in $M$ and an associated orthonormal basis of $T_xM$, equation \eqref{Eq:TOM} describes a moving frame along a process $X_t = \pi(U_t)$ taking values in $M$, itself solution of :
\begin{equation}
    \label{eq:XSDequation}
    dX_t = \sum_{i=1}^d (U_te_i)\circ dW^i_t + B(t,X_t)dt \text{ on } T_{X_t}M.
\end{equation}
The fact that the equation on $X$ is not autonomous but involves $U$, translates the need to keep track of how the noise should push the process $U$ on the manifold, that is how the noise pushes the orientation of $T_UM$.

\begin{remark}
We need Stratonovich integral to give a chart-independent definition of a solution. Indeed, without the chain rule given by Stratonovich integral, we would get a second-order term when changing chart, corresponding to Ito's corrective term, that wouldn't correspond to the change of coordinate of a tensor field, and thus the process wouldn't be a solution of the same equation by change of coordinate.
\end{remark}

The Fokker-Planck equation is obtained, similarly as in the Euclidian case, by applying Ito's formula to a test function $\varphi \in C_{1,2}([0,T] \times M)$, \textit{i.e }

\begin{equation*}
    \dE[\varphi(t,X_t) - \varphi(s, X_s)] = \dE\Big[\int_s^t (\partial_t \varphi + B \cdot \nabla_g \varphi + \frac{1}{2}\Delta_g \varphi) (v, X_v)dv\Big].
\end{equation*}

Supposing that the process $(X_\cdot)$ has regular a density $\mu \in C^{1,2}([0,T]\times M)$,

\begin{equation*}
    \int_M \varphi(t)\mu_t - \int_M \varphi(s)\mu_s = \int_s^t \int_M (\partial_t \varphi + B\cdot \nabla_g \varphi - \frac{1}{2} \Delta_g \varphi)\mu_v dv,
\end{equation*}

where the integral is with respect to $dvol_M$ the manifold volume associated to the metric $g$. Integrating by part in $t$ and $x$, we get,

\begin{equation*}
    0 = \int_s^t \int_M (\partial_t \mu_v + \div_g(B\mu_v) - \frac{1}{2} \Delta_g \mu_v)\varphi dv.
\end{equation*}

 This naturally gives the Fokker-Planck equation satisfied by the law of the solution of the stochastic differential equation,

\begin{equation}
    \label{eq:FPKequation}
    \partial_t \mu + \div_g(B\mu) - \frac{1}{2} \Delta_g \mu = 0.
\end{equation}

Hence, if the dynamics of the nodes are given by \eqref{eq:XSDequation}, then the graph evolves according to \eqref{eq:FPKequation}.
\subsection{Deriving the mean field game system}

We will now derive the associated mean field game system, which models the strategic equilibrium in a MFG in which players control their interaction with the graph.

To introduce controlled systems in differential geometry, we need the concept of \textit{vector bundles}(definition in the appendix). Formally, a vector bundle is a varying family of vector spaces on $M$. For example, this concept addresses the fact that in the case of the vector space of infinitesimal movements at some point, referred to as the \textit{tangent space} at that point, does not coincide with the tangent space at another point. A \textit{section} of the tangent space is a vector field on $M$. For instance, on the embedded 2-sphere in $\dR^3$, a section of the tangent space is a continuously varying family of vectors over the sphere, where each vector associated to a point of the sphere is in the tangent plane of the sphere at that point. 

Let $E$ be a vector bundle on $M$, and $C(E)$ the space of global continuous sections of $E$. A instantaneous control will take value in $C(E)$, for example :
\begin{enumerate}
    \item $E = M \times \dR^k$ trivial bundle. 
    \item $E = TM$, the tangent bundle, defined as the union of all the tangent spaces at each point $TM \defeq \cup_{x\in M} T_xM$, allows to control the speed(see Lagrangian mechanics on Manifolds in \cite{fathi2008weak}).
\end{enumerate}

The set of admissible controls is defined as,
\begin{equation*}
    \mathcal{U}_t = \{ \alpha : [t,T) \xrightarrow{} C(E) , \alpha \text{ is }\mathcal{F-}\textit{adapted} \}.
\end{equation*}

We now can introduce \textit{strategic graphs} within the framework of mean field games and geometrical graphs. Suppose that some agent faces, over a time horizon $[0,T]$, a 'mean field geometrical graph' on some manifolds, represented as a probability flow $(\mu_t)_{t\in[0,T]}$ in $C([0,T],\mathcal{P}(M))$. The agent controls the position of a node; and is rewarded for its contribution in the geometrical structure. Given a running and a terminal cost functional $L : [0,T) \times \mathcal{P}(M) \times E \xrightarrow{} \dR $ and $G : \mathcal{P}(M) \times M \xrightarrow{} \dR $, we can represent the optimal control problem encountered by the agent as,
\begin{equation}
    \label{sys:StoOpContProblem}
    \begin{cases}
        V(t,x) = \underset{\nu \in \mathcal{U}_t}{\inf}\dE \big[ \int_t^T L(s, \mu_s, X_s, \nu_s) ds + G(X_T,\mu_T) | X_t = x], \\
        dX_t = \sum_{i = 1}^d (U_te_i)\circ dW^{i}_t + B(t,X_t,\nu_t)dt \text{ on } T_{X_t}M,
    \end{cases}
\end{equation}
where $B: [0,T] \times M \times C(E) \mapto TM$ is the controlled drift. $V$ is called the value function of the control problem, assuming it is regular, from a dynamic programming principle and Ito's formula, similarly as in the classical case we obtain the following Hamilton-Jacobi-Bellman equation that be satisfied by $V$,

\begin{equation*}
    \begin{cases*}
        \partial_t V  + \Delta_g V + H(t, x, \nabla_g V, \mu_t) = 0 \text{ on }[0,T)\times M,\\
        V(T, \cdot) \equiv G(\mu_T),
    \end{cases*}
\end{equation*}

where $H : [0,T] \times TM \times \mathcal{P}(M) \mapto \dR$ is the associated Hamitonian defined as, 

\begin{equation*}
    H(t,x, p , m) = \inf_{\alpha \in E_x} \{ B(t,x, \alpha) \cdot p + F(t,x, m , \alpha) \}.
\end{equation*}

If we suppose further that the Hamiltonian is regular and the value function is smooth, the optimal closed loop drift is given by $ - D_p H(t, X^*_t,\nabla_g V(t,X^*_t), \mu_t)$, with $D_pH : [0,T] \times TM \times \mathcal{P}(M) \mapto TM$, being the derivative with respect to the third argument.
 
In mean field games, the Nash equilibrium, is represented as a probability flow $(\mu_t)_t$, characterized as a fixed point as, $\mu_t = \mathcal{L}(X^*_t)$, where $(X^*_{\cdot})$ is the optimal controlled process of problem \eqref{sys:StoOpContProblem}, with the probability flow $(\mu_t)_t$. This leads to the following partial differential equation system,

\begin{equation}
    \begin{cases*}
        -\partial_t v - \frac{1}{2}\Delta_g v + H(t,x, \nabla_g v, \mu_t) = 0 \text{ on } [0,T)\times M,\\
        \partial_t \mu - \div_g( D_p H (t,x,\nabla_g v,\mu) \mu) - \Delta_g \mu  = 0  \text{ on } [0,T)\times M,\\
        \mu_{t = 0} \equiv m_0, v(T, \cdot) \equiv G(\mu_T).
    \end{cases*}
\end{equation}

The first equation corresponds to the Hamilton-Jacobi-Bellman equation, associated with the value function of the game $V$. The second equation, the Fokker-Planck equation, describes the evolution of the density of nodes. The boundary conditions correspond to the initial distribution of players and the terminal cost. Some models lead to separated Hamiltonians, and thus to the following system,

\begin{equation}
    \label{sys:GeneralMFGsys}
    \tag{$MFG_{sys}$}
    \begin{cases*}
        -\partial_t v - \frac{1}{2}\Delta_g v + H(t,x, \nabla_g v) = F(t,x,\mu_t) \text{ on } [0,T)\times M,\\
        \partial_t \mu - \text{div}_g( D_p H (t,x,\nabla_g v) \mu) - \Delta_g \mu  = 0  \text{ on } [0,T)\times M,\\
        \mu_{t = 0} \equiv m_0, v(T, \cdot) \equiv G(\mu_T),
    \end{cases*}
\end{equation}

where $F$ is the running cost function. Similarly as in the classical case, we can apply Lasry and Lions \cite{lasry2007mean} monotonicity hypothesis to prove the uniqueness of this system. 

\begin{definition}(Monotone)
    $F : M \times \mathcal{P}(M) \xrightarrow[]{} \dR$, is said to be monotone if,
    \begin{equation*}
        \int_M (F(x,\mu) - F(x, \nu))(\mu(dx) - \nu(dx)) \geq 0, \forall \mu, \nu \in \mathcal{P}(M).
    \end{equation*}
    Furthermore, F is said to be strictly monotone if,
    \begin{equation*}
        \int_M (F(x,\mu) - F(x, \nu))(\mu(dx) - \nu(dx)) > 0, \forall \mu, \nu \in \mathcal{P}(M), \mu \neq \nu.
    \end{equation*}
\end{definition}

\begin{theorem}
   Suppose that $G$ is monotone, $F$ strictly monotone and that $H$ is convex in the $p$ variable, then the solution to \ref{sys:GeneralMFGsys} is unique.
\end{theorem}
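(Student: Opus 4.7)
The plan is to run the classical Lasry--Lions duality computation, adapted to the Riemannian volume measure $d\text{vol}_g$. Suppose $(v_1,\mu_1)$ and $(v_2,\mu_2)$ are two classical solutions of \eqref{sys:GeneralMFGsys}, sufficiently regular and with enough decay at infinity so that all the integrations by parts below can be performed without boundary contributions. Set $w \defeq v_1 - v_2$ and $m \defeq \mu_1 - \mu_2$, and study the quantity
$$
\Phi(t) \defeq \int_M w(t,x)\,m(t,x)\,d\text{vol}_g(x).
$$
Using the HJB equations for $v_1,v_2$, the Fokker--Planck equations for $\mu_1,\mu_2$, the self-adjointness identity $\int_M \mu\,\Delta_g v\,d\text{vol}_g = \int_M v\,\Delta_g\mu\,d\text{vol}_g$, and the divergence identity $\int_M v\,\div_g(X\mu)\,d\text{vol}_g = -\int_M \nabla_g v \cdot X\,\mu\,d\text{vol}_g$, the second-order terms cancel pairwise and I expect
$$
\Phi'(t) = -\int_M \bigl(F(t,x,\mu_1) - F(t,x,\mu_2)\bigr)(\mu_1 - \mu_2)\,d\text{vol}_g \;+\; \int_M \mathcal A_H\,d\text{vol}_g,
$$
where, writing $p_i \defeq \nabla_g v_i$,
$$
\mathcal A_H \defeq \bigl[H(p_1) - H(p_2) - (p_1 - p_2)\cdot D_p H(p_1)\bigr]\mu_1 + \bigl[H(p_2) - H(p_1) + (p_1 - p_2)\cdot D_p H(p_2)\bigr]\mu_2.
$$

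Convexity of $H(t,x,\cdot)$ in $p$ makes each bracket non-positive, and since $\mu_1,\mu_2\geq 0$ one gets $\int_M \mathcal A_H\,d\text{vol}_g \leq 0$. Monotonicity of $F$ gives $\int_M (F(\mu_1)-F(\mu_2))(\mu_1-\mu_2)\,d\text{vol}_g \geq 0$. Hence $\Phi'(t) \leq 0$ on $[0,T]$. Integrating in time and using $\mu_1(0)=\mu_2(0)=m_0$ (which kills $\Phi(0)$) yields $\Phi(T) \leq 0$. On the other hand, the terminal conditions $v_i(T) = G(\mu_i(T))$ combined with monotonicity of $G$ force
$$
\Phi(T) = \int_M \bigl(G(\mu_1(T)) - G(\mu_2(T))\bigr)(\mu_1(T) - \mu_2(T))\,d\text{vol}_g \geq 0,
$$
so $\Phi(T)=0$ and the time-integrated inequality is in fact an equality. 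This forces both non-positive contributions to vanish identically, and by \emph{strict} monotonicity of $F$ one concludes $\mu_1 \equiv \mu_2$ on $[0,T]\times M$. Once the measures coincide, $v_1$ and $v_2$ satisfy the same Hamilton--Jacobi equation with identical terminal data $G(\mu_1(T)) = G(\mu_2(T))$, and standard uniqueness for the HJB equation on $M$ gives $v_1 = v_2$.

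The one step that is genuinely more delicate than in the compact or flat case is the justification of the no-boundary-term integration by parts and of the differentiation of $\Phi$ under the integral sign on the possibly non-compact manifold $M$. This requires sufficient integrability/decay of $v_i, \nabla_g v_i, \mu_i, \nabla_g \mu_i$ along an exhausting sequence of geodesic balls, which I would obtain by invoking the a priori regularity estimates available under the bounded-geometry framework used later in the paper. The algebraic structure of the duality argument is otherwise identical to the Euclidean case.
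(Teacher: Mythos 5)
Your proposal is correct and follows essentially the same Lasry--Lions duality argument as the paper: the paper tests the Fokker--Planck equation with $\bar u = v_1 - v_2$ and combines with the HJB equations, which is the same computation as your differentiation of $\Phi(t)=\int_M w\,m\,d\mathrm{vol}_g$, with the identical convexity brackets and the identical use of the monotonicity of $F$ and $G$. Your explicit caveat about justifying the integrations by parts on a non-compact $M$ (and your final step recovering $v_1=v_2$ from HJB uniqueness) is if anything more careful than the paper's formal treatment.
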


\begin{proof}
    Let $(u^i,m^i), i = 1,2$ be two solutions, denote by $\Bar{m} = m^1 - m^2$ and $\Bar{u} = u^1 - u^2$, then using the Fokker-Planck equation with $\Bar{u}$ as a test function, we get,
    \begin{align*}
        \int_0^T\int_M (\Bar{m}(\partial_t\Bar{u} + \Delta \Bar{u}) & + \nabla\Bar{u}\cdot (D_pH(\nabla u^1)m^1 - D_pH(\nabla u^2)m^2))dxdt \\  & = \underset{\geq 0}{\underbrace{\int_M (G(m^1_T)-G(m^2_T))\Bar{m}_T(dx)}}- \underset{=0}{\underbrace{\int_M \Bar{u}(0)\Bar{m}_0(dx)}} \geq 0.
    \end{align*}

    Now using the Hamilton-Jacobi-Bellman equation, one gets,
    
    \begin{align*}
        -\int_0^T\int_M ((F(m^1_t)-F(m^2_t))\Bar{m}_t(dx)dt \geq &-\int_0^T \int_M \Bar{m}(H(\nabla u^1) -H(\nabla u^2)) dxdt \\ & + \int_0^T \int_M \nabla\Bar{u}\cdot (D_pH(\nabla u^1)m^1 - D_pH(\nabla u^2)m^2))dxdt 
    \end{align*}

    Noting that from the convexity of $H$,
    \begin{equation*}
        H(t,x,q) - H(t,x,p) \leq D_pH(t,x,q)\cdot (q-p), \ \forall t\in [0,T],\forall x \in M, \forall p,q \in T_xM.
    \end{equation*}

    This implies that the right and side of the last inequality is positive, so that

    \begin{equation*}
        \int_0^T\int_M ((F(m^1_t)-F(m^2_t))\Bar{m}_t(dx)dt \leq 0.
    \end{equation*}

    From the strict monotonicity of $F$ for any $t$ this implies that $m^1 = m^2$.
\end{proof}
\subsection{The case of hyperbolic geometry}

The hyperbolic space proves to be an adequate base space to reproduce real-life network properties, from the seminal work of \cite{krioukov2010hyperbolic} there has been an ongoing literature on hyperbolic graph models, one can consult \cite{aiello2001random, garcia2016hidden,aldecoa2015hyperbolic}. From a modelization point of view, it comes with the benefits of having models with global charts, the study can thus be reduced to classical forward-backward systems in domains of $\dR^n$ with coefficients issuing from the metric tensor in the global chart. We propose to expand the equations in the Poincaré disk model: $(B_{\dR^n}(0,1), \frac{4\delta_{ij}}{(1-\|x\|^2)^2})$. 

The domain of the chart is the unit ball of $\dR^n$, and it has a conformal metric defined as,
\begin{equation*}
    g_{ij}(x) = 4\delta_{ij}/(1-\|x\|^2)^2,
\end{equation*}
where $\delta_{ij}$ is the Kronecker delta symbol.
Here $\nabla, \Delta, div, \langle \cdot , \cdot \rangle, \|\cdot\|$ respectively correspond to the gradient, the Laplacian, divergence, scalar product and norm of $\dR^n$. By definition, the Laplace-Beltrami operator writes,
\begin{equation*}
    \Delta_g u = \frac{1}{\sqrt{G}}\partial_i (\sqrt{G}g^{ij}\partial_j u ) =\frac{1}{4} (1-\|x\|^2)^n\sum_{i=0}^n \partial_i ((1-\|x\|^2)^{2-n}\partial_i u ),
\end{equation*}

developing one gets,
\begin{equation*}
    \Delta_g u = \frac{(n-2)(1-\|x\|^2)}{2}\langle x, \nabla u \rangle + \frac{(1-\|x\|^2)^2}{4} \Delta u.
\end{equation*}

The norm of the gradient writes,
\begin{equation*}
    \|\nabla_g u \|_g^2 = \partial_i u g^{ij} \partial_j u = \frac{1}{4}(1-\|x\|^2)^2\sum (\partial_i u )^2 = \frac{1}{4} (1-\|x\|^2)^2\|\nabla u\|^2.
\end{equation*}

So that mean field game system with quadratic Hamiltonian on the n-dimensional hyperbolic space writes as,
{\small
\begin{equation*}
    \begin{cases*}
        -\partial_t u - \frac{(n-2)(1-\|x\|^2)}{4}\langle x, \nabla u \rangle - \frac{(1-\|x\|^2)^2}{8} \Delta u + \frac{(1-\|x\|^2)^2}{8}\|\nabla u \|^2  = F(m_t, x) \text{ \ \ on } [0,T] \times B_{\dR^n}(0,1),\\ %
        \partial_t m - \frac{(n-2)(1-\|x\|^2)}{4}\langle x, \nabla m \rangle - \frac{(1-\|x\|^2)^2}{8} \Delta m + \frac{1}{4}(1-\|x\|^2)^2 \text{div}(\nabla u m) + \frac{(n-2)}{2}(1-\|x\|^2)\langle x, \nabla u \rangle m = 0.
    \end{cases*}
\end{equation*}
}

One can check that the operator of the Hamilton-Jacobi-Bellman equation satisfies an invariance domain condition\cite{cannarsa2010invariant}, however, the operator of the Fokker Planck equation is not the adjoint of the linearised with respect to the Lebesgue measure, it is true w.r.t to the volume measure of the manifold $dvol = 2dx/(1-\|x\|^2)^n$. One should note that in this case, we could study the  system following the work of \cite{porretta2020mean}, since the metric gives an invariance condition for the domain, with the difference that the Fokker-Planck equation is satisfied w.r.t the volume measure induced by $g$, and not the Lebesgue measure.






\subsection{An instructive example: Curvature mean-field game}
    We introduce here the example of the Curvature mean-field game which we believe to be of interest and we study it on a compact manifold with linear quadratic cost in the stationary case. Inspired by the asymptotic Ollivier curvature, we propose the following mean field \textit{toy model}. 
    
    Let $(M,g)$ be a compact manifold, fix some discount factor $r>0$, and consider the stationary mean field game of the form,
    \begin{equation}
        \label{sys:CurvatureMFG}
        \begin{cases}
            0 = \text{div}(\nabla v m) + \Delta_g m \text{ on } M,\\
            0 = -rv - \frac{1}{2} |\nabla v|^2_g + \Delta_g v + R^m\text{ on } M,
        \end{cases}
    \end{equation}

    where $R^m$ is the mean-field scalar curvature, defined in the previous section, as $g^{ij}\text{Ric}^m_{ij} =  R^g - \Delta_g\log(m)$, with $R^g$ the scalar ambient curvature.
    
    This system is interpreted as the equilibrium of an infinite horizon stochastic linear quadratic mean-field game, wherein the cost function corresponds to the local average Ollivier curvature of the limit of a large graph formed by the aggregated strategies of the agents. Each agent controls its speed and minimizes a kinetic energy term plus its average curvature.
    
    The agents aim to minimize curvature since the discrete setting asserts that agents with negative curvature have neighbors who depend on them to stay connected to the rest of the graph, thus being central in the local connection structure. Unlike a centrally planned network, designed for \textit{robustness} with redundant connections and positive average Ollivier curvature, competitive agents strategically shape their environments to create \textit{local bottlenecks}. This enforces their neighbors' dependency on them, resulting in tree-like, hierarchical structures with negative curvature. For example, negative curvature in a non-centrally planned network is measured on the Internet network \cite{ni2015ricci}.

    System~\eqref{sys:CurvatureMFG} can be reduced to a single nonlinear equation in the following manner.
    We define a measure $m$ as, 
    \begin{equation*}
        m = e^{-v}dvol_M.
    \end{equation*}
    Since $M$ is compact, and assuming that $v$ is twice continuously differentiable, then $m$ is a finite-mass measure solution of the first equation, so up to a renormalization, it is a probability solution to the Fokker-Planck stationary equation.
    Plugging this $m$ in the second equation,
    \begin{equation*}
        -rv - \frac{1}{2} |\nabla v|^2_g + \Delta_g v + \underset{R^\mu}{\underbrace{R^g + 2\Delta_g v}} = -rv - \frac{1}{2} |\nabla v|^2_g + 3\Delta_g v + R^g = 0.
    \end{equation*}
    Thus, the system reduces to a quasi-linear elliptic equation.
    \begin{equation*}
        \begin{cases}
            m = e^{-v}dvol_M\frac{1}{\int e^{-v}dvol_M}, \\
            -rv - \frac{1}{2} |\nabla v|^2_g + 3\Delta_g v + R^g = 0.
        \end{cases}
    \end{equation*}
    
    If the scalar curvature is constant, then the solution is explicitly given by $v \equiv \frac{1}{r}R^g$ and $m$ is the uniform measure on the manifold, there is no preferred position. In the general case, the mean-field curvature $R^m$ tends to be an averaged version of the scalar curvature of the base space. Near equilibrium, the nodes are attracted towards minima of the scalar curvature, causing a concentration of the measure in that area and thus increasing the local scalar curvature, turning the neighborhood into a less desirable target.

    The main motivation is to give meaning to dynamic graphs and controlled geometries. This example hints that geometric graphs provide a simple framework that allows for computations, while still preserving some geometric information at the limit. We believe that this represents a first step in the modeling process and that geometric graphs offer a wide range of tools to introduce complexity into future models.


\section{A case study: mean field games systems on Riemannian manifolds of bounded geometry with quadratic Hamiltonian}
In this section, we prove existence in the case of a quadratic Hamiltonian on (possibly) non-compact class of Riemannian manifold. We restrict the model system \eqref{sys:GeneralMFGsys} so that each player controls its speed, with $E = TM$. We consider the separated cost of the form: $F(t, \mu, x, \alpha) \defeq \Tilde{F}(\mu, x) + \frac{1}{2}\|\alpha\|^2_g$. Thus, for $(x,p)\in TM$, that is $x\in M, p \in T_xM$, the associated Hamiltonian is of the form,
\begin{equation*}
    H(x, p , \mu) = \inf_{\alpha \in T_xM} \Big\{ \langle \alpha, p \rangle_{g(x)} + \frac{1}{2} \|\alpha\|^2_{g(x)} \Big\} + \Tilde{F}(\mu, x),
\end{equation*}
using a local chart and writing,
\begin{equation*}
    \inf_{\alpha \in \dR^d} \Big\{ \alpha^i g_{ij} p^j + \frac{1}{2}\alpha^ig_{ij}\alpha^j \Big\} + \Tilde{F}(\mu, x),
\end{equation*}
one concludes that, the infimum is $-\frac{1}{2}p^ig_{ij}(x)p^j + \Tilde{F}(\mu, x)$, thus,
\begin{equation*}
    H(x, p, \mu) = -\frac{1}{2} \|p\|^2_g + \Tilde{F}(\mu, x),
\end{equation*}
with the optimal $\alpha$ given by $D_p H(x,p,\mu) = -p$. Finally, we obtain the following system,
\begin{equation*}
    \label{eq:MFG}
    \begin{cases*}
        -\partial_t v - \Delta_g v + \frac{1}{2}|\nabla v|_g^2 = F(m_t) \text{ on } [0,T]\times M,\\
        \partial_t m - \text{div}_g( \nabla v m) - \Delta_g m  = 0 \text{ on } [0,T]\times M,\\
        m_{t = 0} = m_0, v(T, \cdot) = G(m_T).
        \tag{QuadSys}
    \end{cases*}
\end{equation*}

Although the structure of the equation is similar to a classical linear quadratic mean field game system, the estimates require a specific treatment of the behavior of solutions at infinity, as we aim to treat non-compact manifolds, such as hyperbolic space. This will be addressed by assuming the manifold is of bounded geometry, encompassing, for example, the compact case, Euclidean space, and hyperbolic space. This will allow us to apply the results from \cite{amann2016parabolic, rakotomalala2024practical} and establish a comparison principle for the Hamilton-Jacobi-Bellman equation.
\subsection{Riemannian Manifolds of bounded geometry}
In order to study the system we need to introduce the hypotheses on the geometry of the manifold.
\begin{definition}
    Let $i : M \to \dR_+$ be the injectivity radius function, that is $i(x)$ is the largest radius for which the exponential map $\exp_x$ is a diffeomorphism. A manifold is said to have a positive injectivity radius if,
    \begin{equation*}
        \underset{x \in M}{\inf} i(x) > 0.
    \end{equation*}
\end{definition}

\begin{definition}[Manifold of bounded Geometry]
    A Riemannian manifold $(M,g)$ equipped with its Levi-Civita connection is said to be \textit{of bounded geometry} if, it is a \textit{complete} metric space, it has \textit{positive injectivity radius}, and if all covariant derivatives of the Riemann curvature tensor are bounded,
    \label{def:BoundedGeo}  
    \begin{equation*}
            \| |\nabla^k R |_g\|_\infty \leq C(k), \forall k \in \mathbb{N}_0.
    \end{equation*}
\end{definition}

In the following, we will use Herbert Amann \cite{amann2016parabolic} result of maximal regularity for parabolic equations on \textit{uniformly regular manifold}. A notable result proven by M. Disconzi, Y. Shao, and G. Simonett in \cite{disconzi2014some}, is the equivalence between the geometrical definition of manifold of bounded geometry and the notion of uniformly regular manifold. A few examples of manifolds of bounded geometry, are any Euclidean space, any isometric images of uniformly regular Riemannian manifolds, any compact Manifold, the d-dimensional hyperbolic space or the d-dimensional sphere.

    

\subsection{Parabolic equations on Riemannian manifolds}\label{subsec:ParabolicEquationCalpha}
We now state a maximal regularity result for linear parabolic equations on manifolds due to Herbert Amann \cite[Theorem 1.23]{amann2017cauchy}, needed for the analysis. To this end, we define the following function spaces.
For $\sigma, \tau \in \mathbb{N}$, let $V = T^\sigma_\tau M$ be a tensor bundle on M. For $k \in \mathbb{N}$, we denote by $C^k(V)$ the space of $k$-derivable sections of $V$, and $C^k_b(V)$ the subspace of bounded section with bounded derivatives, it is a Banach space with the norm $\|\cdot\|_{k,\infty}$, defined for $u \in C^k_b(V)$ as,
\begin{equation*}
    \|u \|_{k,\infty} \defeq \max_{0\leq i \leq k} \| |\nabla^i u|_g\|_\infty
\end{equation*}
where $|\cdot|_g$ denotes the complete contraction induced by the metric $g$.
Now, let $\alpha \in (0,1)$, we define the $(k+\alpha)$-\textit{little Holder} space as,
\begin{equation*}
    C_b^{k+\alpha}(V) \defeq (C^k_b(V), C^{k+1}(V))^0_{\alpha,\infty},
\end{equation*}
where $(\cdot, \cdot)^0_{\alpha,\infty} $ is the continuous interpolation method \cite{amann2017cauchy,amann2013function}.
Now we introduce the parabolic anisotropic function spaces \cite{amann2012anisotropic}, for $T\in \dR$, let $J = [0,T]$,
\begin{equation*}
    C_b^{((k+\alpha)/2,k+\alpha)}(J\times V) = BUC(J,C_b^{k+\alpha}(V))\cap C^{(k+\alpha)/2}(J,C_b(V)),
\end{equation*}
where $BUC$ is the space of \textit{bounded uniformly continuous} functions.

We have the following equivalence,
\begin{align*}
    u \in C_b^{( (k+\alpha)/2 + 1, k+\alpha + 2)}(J\times V) \text{ iff } & \\ \nabla^j u \in C_b^{((k+\alpha)/2, k+\alpha)}(J\times V^\sigma_{\tau + j})
    & \text{ for } 0 \leq j \leq 2 \text{ and } \partial_t u \in C_b^{((k+\alpha)/2,k+\alpha)}(J\times V)
\end{align*}

\begin{proposition}
\label{th:parabolicReg}
Let $(M,g)$ be a manifold of bounded geometry, $J = [0,T]$, $k\in \mathbb{N}$ and $0< \alpha <1$.
Take $B \in  C_b^{k+\alpha,(k+\alpha)/2}(J \times TM), a \in C_b^{k+\alpha,(k+\alpha)/2}(J \times M),f\in C_b^{k+2+\alpha,(k+2+\alpha)/2}(J \times M)$ and $ \varphi \in C_b^{k+2+\alpha}(M)$.
Then, there exists a unique solution $u$ belonging to the space $C_b^{k+2+\alpha,(k+2+\alpha)/2}(J \times M)$, solution of the backward parabolic problem,
 \begin{equation*}
     \begin{cases*}
         \partial_t u + B\cdot \nabla_g u + \Delta_g u = f \text{ on } (0,T)\times M, \\
         u_{t = T}  = \varphi \text{ on } M.
     \end{cases*}
 \end{equation*}

    Furthermore, we have the estimate,
    \begin{equation*}
        \|u\|_{C_b^{k+2+\alpha,(k+2+\alpha)/2}}\leq C(\|f\|_{C_b^{k+2+\alpha,(k+2+\alpha)/2}}+ \|\varphi\|_{C_b^{k+2+\alpha}}),
    \end{equation*}
    where $C$ depends on geometrical bounds, the norm of the coefficients, and on T only.
\end{proposition}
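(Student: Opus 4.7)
The plan is to reduce the statement to a direct application of Amann's maximal regularity theorem \cite[Theorem 1.23]{amann2017cauchy}, which is formulated in the language of \emph{uniformly regular} Riemannian manifolds. The first step is therefore to bridge our geometric setting to Amann's analytic one: by the result of Disconzi--Shao--Simonett cited in the previous subsection, a Riemannian manifold of bounded geometry is uniformly regular, so the anisotropic little Hölder spaces $C_b^{((k+\alpha)/2,k+\alpha)}(J\times V)$ defined above coincide with the spaces appearing in Amann's framework. This equivalence is what allows the intrinsic hypotheses of Definition~\ref{def:BoundedGeo} to unlock the Schauder-type machinery.

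The second step is a time reversal. Setting $\tilde u(s,x) = u(T-s, x)$, $\tilde B(s,x) = -B(T-s,x)$, $\tilde f(s,x) = -f(T-s,x)$ turns the backward Cauchy problem into the forward Cauchy problem
\begin{equation*}
\begin{cases*}
\partial_s \tilde u - \Delta_g \tilde u + \tilde B \cdot \nabla_g \tilde u = \tilde f & on $(0,T)\times M$,\\
\tilde u_{s=0} = \varphi & on $M$,
\end{cases*}
\end{equation*}
whose linear operator $\mathcal{A} := -\Delta_g + \tilde B \cdot \nabla_g$ is the kind of operator directly covered by Amann's theorem. The time-reversed coefficients clearly lie in the same anisotropic Hölder classes as the original ones.

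The third step is to verify that $\mathcal{A}$ satisfies Amann's hypotheses: uniform strong ellipticity and uniform $C_b^{k+\alpha,(k+\alpha)/2}$-regularity of the coefficients in the atlas of \emph{uniform charts} provided by bounded geometry. In local uniform coordinates one has $\Delta_g \charteq g^{ij}\partial_{ij} - g^{ij}\Gamma^k_{ij}\partial_k$; on a manifold of bounded geometry the components $g_{ij}$, $g^{ij}$ and the Christoffel symbols $\Gamma^k_{ij}$ are uniformly bounded with all derivatives uniformly bounded (these are standard consequences of the boundedness of covariant derivatives of the Riemann tensor together with positive injectivity radius), and $g^{ij}$ is uniformly positive-definite. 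Hence the principal symbol of $\mathcal{A}$ is uniformly elliptic, and the drift $\tilde B$ contributes a lower-order term of the required regularity by assumption. With these verifications, Amann's theorem produces a unique solution in $C_b^{(k+2+\alpha)/2,k+2+\alpha}(J\times M)$ together with the stated quantitative estimate, where the constant $C$ depends only on $T$, on the Hölder norms of the coefficients, and on the geometric constants controlling the uniform atlas (injectivity radius and curvature bounds).

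The main obstacle, and really the only nontrivial content of the proof, is the translation between the intrinsic geometric hypothesis (Definition~\ref{def:BoundedGeo}) and the uniform-chart analytic hypothesis required by Amann. This translation is precisely what is furnished by Disconzi--Shao--Simonett; once invoked, the rest of the argument is bookkeeping on coefficient norms. A secondary but minor point is to check that the time-reversed drift does not spoil the norm bounds, which is immediate since reversing time is an isometry on the anisotropic Hölder spaces.
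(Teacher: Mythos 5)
Your proposal is correct and follows exactly the route the paper itself takes: the paper states this proposition without proof as an application of Amann's maximal regularity theorem for uniformly regular manifolds, relying on the Disconzi--Shao--Simonett equivalence between bounded geometry and uniform regularity, which is precisely your first and third steps. Your time reversal and the verification of uniform ellipticity and coefficient bounds in uniform charts are the routine details the paper leaves implicit, and they are carried out correctly.
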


\begin{remark}
    The more general theorem holds true for $r-$th order operators defined on $(\sigma, \tau)-$tensors fields.
\end{remark}

\subsection{Maximum principle for parabolic equations.}\label{subsec:MaxPrinPara}
For the upcoming analysis, we require a maximum principle. On a Riemannian manifold, at a maximal point of some sub-solution to a parabolic equation, one can conclude, using a local chart, a similar contradiction as in the classical maximum principle. In an unbounded domain, one must employ an auxiliary function to establish the existence of such a maximizer. In the case of $\dR^d$, one can use $x\mapsto \frac{\epsilon}{2}|x|^2$, to obtain a maximizer of $u(x) - \frac{\epsilon}{2}|x|^2$, and conclude a contradiction for any $\epsilon > 0$. However, on a general Riemannian manifold, the distance function to a reference point squared might not be twice differentiable everywhere. A simple example is the infinite cylinder: $  \mathbb{T} \times \dR$, where $(x,y) \in [0,1]\times \dR \mapsto d^2((0,0),(x,y))$ is not differentiable along $\{(1/2,y), y\in \dR\}$, essentially the distance function will not be differentiable where there is more than one geodesic connecting the two points. We give in Proposition \ref{prop:ComparaisonPrinciple} several conditions on the Riemannian manifold under which the maximum principle holds.

\begin{definition}
    \label{def:ComparaisonPrinciple}
    We say that $(M,g)$ satisfies the parabolic maximum principle if, for any bounded $u$ in $C^{1,2}([0,T]\times M)$, sub-solution of the parabolic equation, 
    \begin{equation*}
        -\partial_t u -\Delta_g u + ru \leq 0 \text{ in } \mathring{M_T},
    \end{equation*}
    with $\underset{(t,x)\in M_T}{\inf}r(t,x) > 0$, and $M_T = [0,T]\times M$. We have,
    \begin{equation*}
        u(T, \cdot) \leq 0 \text{ in } M \implies  u \leq 0 \text{ in } M_T
    \end{equation*}

\end{definition}
\begin{proposition}
    \label{prop:ComparaisonPrinciple}
    Each of the following conditions are sufficient for a Riemannian manifold to satisfy a parabolic maximum principle.
    \begin{enumerate}
        \item If $(M,g)$ is compact.
        \item $\forall \lambda > 0 , \exists \psi^\lambda \in C^2(M)$, such that $ \lim_{d(x_0, x) \xrightarrow[]{} +\infty } \psi^\lambda(x) = + \infty $ for some $x_0 \in M$, and  $\psi^\lambda$ is a super-solution of the elliptic equation,
            \begin{equation*}
                    \tag{$\Psi$}
                    \label{hyp:EllipCoerc}
                    0 \leq \lambda \psi^\lambda - \Delta_g \psi^\lambda \text{ in } M.
            \end{equation*}
        \item If $(M,g)$ is connected and has negative sectional curvature, and the Ricci curvature is bounded below.
        \item If $(M,g)$ is connected and of bounded geometry.
    \end{enumerate}
\end{proposition}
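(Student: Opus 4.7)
The plan is to treat (1) by compactness and to reduce (3)--(4) to (2) via explicit constructions of the super-solution $\psi^\lambda$. For item (1), the product space $[0,T]\times M$ is compact, so the bounded sub-solution $u$ attains its maximum at some $(t^\ast,x^\ast)$: if $t^\ast = T$ the terminal condition yields $u(t^\ast,x^\ast)\le 0$; otherwise, the maximality conditions $\partial_t u(t^\ast,x^\ast) \le 0$ (since $u(\cdot,x^\ast)$ cannot grow past its maximum in the backward time direction) and $\Delta_g u(t^\ast,x^\ast) \le 0$ (from a normal chart at $x^\ast$), combined with $r(t^\ast,x^\ast) > 0$, force $r u(t^\ast,x^\ast) \le 0$ via the sub-solution inequality, so $\sup u \le 0$.

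For item (2), fix $\lambda = \inf_{M_T} r > 0$. Up to replacing $\psi^\lambda$ by $\psi^\lambda - \inf_M \psi^\lambda$ (a legitimate shift: adding a constant leaves $\Delta_g \psi^\lambda$ unchanged and only increases $\lambda \psi^\lambda - \Delta_g \psi^\lambda$ by a nonnegative amount), we may assume $\psi^\lambda \ge 0$. For $\varepsilon > 0$, set $w_\varepsilon \defeq u - \varepsilon \psi^\lambda$. Using the super-solution property $\Delta_g \psi^\lambda \le \lambda \psi^\lambda$, a direct calculation gives
\begin{equation*}
-\partial_t w_\varepsilon - \Delta_g w_\varepsilon + r w_\varepsilon \le \varepsilon\bigl(\Delta_g \psi^\lambda - r \psi^\lambda\bigr) \le \varepsilon(\lambda - r)\psi^\lambda \le 0.
\end{equation*}
Since $u$ is bounded and $\psi^\lambda \to +\infty$ at spatial infinity, $w_\varepsilon \to -\infty$ outside a suitable compact set, hence its supremum is attained at some $(t_\varepsilon,x_\varepsilon) \in M_T$. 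Applying the pointwise argument of case (1) to $w_\varepsilon$ at $(t_\varepsilon,x_\varepsilon)$ gives $w_\varepsilon \le 0$, so $u \le \varepsilon \psi^\lambda$ everywhere; sending $\varepsilon \to 0$ concludes.

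For items (3) and (4), the task reduces to constructing $\psi^\lambda$. The natural ansatz is $\psi^\lambda(x) \defeq \exp(\alpha_\lambda\, \tilde\rho(x))$, where $\tilde\rho \in C^2(M)$ is a smooth proxy for $d(x_0, \cdot)$ diverging at infinity and $\alpha_\lambda > 0$ is a parameter to calibrate. Since
\begin{equation*}
\Delta_g \psi^\lambda = \bigl(\alpha_\lambda^2 |\nabla_g \tilde\rho|_g^2 + \alpha_\lambda \Delta_g \tilde\rho\bigr)\psi^\lambda,
\end{equation*}
the super-solution condition amounts to the scalar inequality $\alpha_\lambda^2 |\nabla_g \tilde\rho|_g^2 + \alpha_\lambda \Delta_g \tilde\rho \le \lambda$, which is secured by choosing $\alpha_\lambda$ small relative to $\lambda$ and to uniform upper bounds on $|\nabla_g \tilde\rho|_g$ and $\Delta_g \tilde\rho$. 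In case (4), bounded geometry furnishes a regularized distance $\tilde\rho \in C^\infty(M)$ comparable to $d(x_0, \cdot)$ with globally bounded gradient and Laplacian, built via the uniformly regular atlas and a partition of unity. In case (3), the Laplacian comparison theorem under the Ricci lower bound yields an upper bound on $\Delta_g d(x_0, \cdot)$ away from $x_0$, negative sectional curvature controls the geodesic structure (absence of conjugate points, controlled cut locus), and a Greene--Wu-type smoothing lifts the $1$-Lipschitz distance function to a $C^2$ approximation with the same qualitative bounds.

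The main obstacle is the construction of $\tilde\rho$ in case (3): without a simple-connectivity assumption, the cut locus of $x_0$ may be nonempty, and one must smooth the distance in a way that simultaneously respects the upper Laplacian bound and the $1$-Lipschitz control. In case (4), by contrast, the regularization is directly supplied by bounded geometry, and the super-solution property collapses to the scalar inequality in $\alpha_\lambda$ above. The remaining steps -- calculus at interior maxima, reduction from the non-compact setting via $w_\varepsilon$, and the exponential ansatz for $\psi^\lambda$ -- are elementary once the geometric setup is in place.
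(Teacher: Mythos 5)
Your treatment of items (1) and (2) is correct and essentially coincides with the paper's: the paper likewise reduces everything to the existence of the coercive super-solution $\psi^\lambda$, normalizes it to be nonnegative, and exploits the maximizer of $u$ minus a small multiple of $\psi^\lambda$ (the paper argues by contradiction and adds an extra $\delta/t$ penalization to keep the maximizer away from $t=0$, where the differential inequality is only assumed on $\mathring{M_T}$; your direct version should either invoke continuity of $\partial_t u$ and $\Delta_g u$ up to $t=0$ or add the same penalization). One small slip: replacing $\psi^\lambda$ by $\psi^\lambda-\inf_M\psi^\lambda$ preserves the super-solution property only when $\inf_M\psi^\lambda\le 0$; the safe normalization, used in the paper, is $\psi^\lambda+|\inf_M\psi^\lambda|$. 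Item (4) also matches the paper, except that the exponential ansatz is unnecessary: since the regularized distance of bounded geometry has uniformly bounded covariant Hessian, the linear choice $\psi^\lambda=\tilde\rho+C/\lambda$ already gives $\Delta_g\psi^\lambda\le C\le\lambda\psi^\lambda$.

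The genuine gap is item (3), which you leave as a sketch. Your ansatz requires a uniform upper bound on $\Delta_g\tilde\rho$, but the Laplacian comparison theorem gives $\Delta_g d(x_0,\cdot)\le(n-1)K_2\coth(K_2\, d(x_0,\cdot))$, which blows up like $(n-1)/d$ at the base point; and you correctly flag the cut locus as an obstruction to smoothness, but you resolve neither difficulty. The paper's device is to compose the distance with a fixed profile $\phi$ that is quadratic near $0$ and equals $r-3/8$ for $r\ge 1$: then $\phi'(r)\,\Delta_g r$ stays bounded at the origin because $\phi'(r)=O(r)$ cancels the $1/r$ singularity, $\phi''$ is bounded, and $\Delta_g(\phi\circ r)\ge -C_K$ yields the super-solution after adding the constant $C_K/\lambda$ --- no smoothing and no exponential needed. (The smoothness of $r$ away from $x_0$ rests on the assertion that negative sectional curvature forces infinite injectivity radius, which, as you implicitly observe, requires simple connectedness via Cartan--Hadamard; if you want to dispense with that, a Greene--Wu approximation preserving the upper Hessian bound would indeed be the route, but that preservation property must be stated and justified, not merely named.)
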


It is noteworthy that the third assumption is closely related to the one used in stochastic theory on manifolds to prove global existence results, as demonstrated in \cite{elworthy1982stochastic}. Moreover, to prove the second and fourth points, we will construct such super-elliptic functions on the manifold. For the proof of this result, we need to use the following two results.

\begin{lemma}[\cite{shubin1992spectral} Lemma 2.1, p.70]
    \label{lem:regulDist}
    Suppose that $(M,g)$ is a \textit{connected} manifold of bounded geometry. There exists a function $\Tilde{d} : M\times M \mapto [0,\infty)$ satisfying the following conditions,
    \begin{enumerate}
        \item there exists $\epsilon > 0$ such that:
            \begin{equation*}
                |\Tilde{d}(x,y) - d(x,y)|< \epsilon.
            \end{equation*}
        \item $\forall x \in M$, $\Tilde{r}_x : y \mapsto \Tilde{d}(x,y)$ is $C^\infty(M)$, and its derivatives are uniformly bounded in $x$, in other words, for $n > 0$,
            \begin{equation*}
                \|\nabla^n \Tilde{r}_x\|_{\infty} \leq C(n) \ \ \ \  \forall x \in M.
            \end{equation*}
    \end{enumerate}    
\end{lemma}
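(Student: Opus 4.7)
The plan is to smooth the distance function $d(x,\cdot)$ by convolution in uniformly nice charts provided by the bounded geometry of $M$. The key technical input is that on a manifold of bounded geometry one has a \emph{uniformly locally finite} atlas: there exist $r_0 \in (0, \inf_x i(x))$, an integer $N$, and a countable collection of points $\{x_\alpha\}_\alpha \subset M$ together with normal coordinate charts $\phi_\alpha : B_{\dR^d}(0, r_0) \to B(x_\alpha, r_0)$ such that $\{B(x_\alpha, r_0/2)\}_\alpha$ covers $M$, each point of $M$ lies in at most $N$ of the enlarged balls, and the components $g_{ij}^\alpha$, $(g^{ij})^\alpha$ and the Christoffel symbols $\Gamma^{k,\alpha}_{ij}$, together with all their partial derivatives, are bounded in each chart by constants independent of $\alpha$. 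One furthermore constructs a subordinate partition of unity $\{\chi_\alpha\}$ with $\mathrm{supp}\,\chi_\alpha \subset B(x_\alpha, r_0/2)$ and $\|\nabla^k \chi_\alpha\|_\infty \leq C_k$ uniformly in $\alpha$. This well-known consequence of Definition \ref{def:BoundedGeo} will be the only geometric ingredient used.

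Next, fix a standard mollifier $\rho \in C_c^\infty(\dR^d)$, nonnegative, supported in $B_{\dR^d}(0,1)$, with unit integral, and set $\rho_\delta(\xi) := \delta^{-d}\rho(\xi/\delta)$ for some $\delta < r_0/4$ to be fixed once and for all. For any $x \in M$ the function $r_x : y \mapto d(x,y)$ is $1$-Lipschitz on $M$; since the metric in every chart is uniformly comparable to the Euclidean one, its pullback $r_x^\alpha := r_x \circ \phi_\alpha$ is Lipschitz on $B_{\dR^d}(0,r_0)$ with a constant $L$ independent of $(x,\alpha)$. Set $\tilde r_x^\alpha := r_x^\alpha * \rho_\delta$ on $B_{\dR^d}(0, 3r_0/4)$, which is $C^\infty$ there, and define
\[
    \tilde d(x,y) \defeq \sum_\alpha \chi_\alpha(y)\, \tilde r_x^\alpha\bigl(\phi_\alpha^{-1}(y)\bigr).
\]
Local finiteness of the cover ensures that this is a finite sum in a neighborhood of each $y$, and each summand is smooth in $y$, so $\tilde r_x := \tilde d(x,\cdot) \in C^\infty(M)$.

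For the approximation bound, for $\xi$ in the support of $\chi_\alpha \circ \phi_\alpha$,
\[
    |\tilde r_x^\alpha(\xi) - r_x^\alpha(\xi)| \leq \int_{B(0,\delta)} |r_x^\alpha(\xi-\eta) - r_x^\alpha(\xi)|\,\rho_\delta(\eta)\, d\eta \leq L\delta,
\]
and since $\sum_\alpha \chi_\alpha \equiv 1$ we conclude $|\tilde d(x,y) - d(x,y)| \leq L\delta =: \epsilon$, yielding (1). For (2), expanding $\nabla^n \tilde r_x$ in a chart via the Leibniz rule produces finite sums (bounded by the overlap $N$) of terms involving $\nabla^j \chi_\alpha$ and partial derivatives $\partial^k \tilde r_x^\alpha$ contracted with products of Christoffel symbols and their derivatives. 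The mollified derivatives satisfy
\[
    \|\partial^k \tilde r_x^\alpha\|_\infty \leq L\,\|\partial^{k-1}\rho_\delta\|_{L^1} \leq C_k\delta^{1-k},
\]
and since $\delta$ is fixed, $L$ is universal, and all other pieces are uniformly bounded by the bounded-geometry hypothesis, we obtain $\|\nabla^n \tilde r_x\|_\infty \leq C(n)$ for every $n$, with $C(n)$ depending only on $n$ and the geometric constants.

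The main obstacle is really packaged into the existence of the uniform atlas and uniform partition of unity; the subsequent mollification is classical. The only subtlety is converting coordinate-derivative estimates into intrinsic $|\nabla^n \cdot|_g$ estimates while preserving uniformity in both $x$ and $\alpha$, which works precisely because on each chart the difference between the covariant derivative $\nabla^n$ and the flat coordinate derivative $\partial^n$ is a polynomial expression in the (uniformly bounded) Christoffel symbols and their derivatives.
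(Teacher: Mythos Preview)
Your proposal is correct and follows exactly the approach the paper attributes to Kordyukov (via \cite{shubin1992spectral}): regularise the distance function by mollification in the uniform normal charts supplied by bounded geometry and glue with a uniform partition of unity, then convert the coordinate-derivative bounds into intrinsic ones using the uniform equivalence of the chart metric with the Euclidean one. The paper does not give its own proof of this lemma, only this description, so your sketch is in fact more detailed than what appears there.
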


This result is due to Yu.A. Kordyukov and relies on an appropriate partition on the unity to regularise the distance function.
There exists a proof in English, in \cite{shubin1992spectral}, where the statement give a uniform bound in coordinates of $|\partial^\alpha_y \Tilde{d}(x,y)|$, to conclude one uses the uniform equivalence of the euclidean norm in a local chart and the pullback metric. Finally, in order to prove the maximum principle we recall the following result.

\begin{proposition}[Comparison Theorem for the Laplacian, \cite{hsu1988brownian} Theorem 3.4.2, p.90]
    \label{thm:LaplacianDistBoundedCurve}
    Let $(M,g)$ be a $n$-dimensional Riemannian manifold and $x_0 \in M$ and note $r(x) = d(x_0, x)$, suppose that the sectional curvature is bounded from above by $K_1^2$ and the Ricci curvature is bounded from below by $-(n-1)K_2^2$. Then inside the cut locus of $x_0$, 
    \begin{equation*}
        (n-1)K_1cot (K_1 r(x)) \leq \Delta_g r(x) \leq (n-1) K_2 coth (K_2 r(x)).
    \end{equation*}
\end{proposition}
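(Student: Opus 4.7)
The plan is to prove both inequalities by a Riccati-type ODE argument applied along a minimizing geodesic from $x_0$ to $x$. Inside the cut locus of $x_0$, the distance function $r$ is smooth and satisfies $|\nabla_g r|_g = 1$ everywhere, so $\text{Hess}(r)$ has $\nabla r$ in its kernel (differentiating $|\nabla r|^2 = 1$) and therefore reduces to an endomorphism of the $(n-1)$-dimensional orthogonal complement of $\nabla r$ whose trace equals $\Delta_g r$.

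First I would fix a unit-speed minimizing geodesic $\gamma:[0,r(x)] \to M$ from $x_0$ to $x$ and apply Bochner's formula to $r$: since $\frac{1}{2}\Delta_g|\nabla r|^2 = 0$, Bochner gives
\begin{equation*}
\frac{d}{dt}\bigl(\Delta_g r \circ \gamma\bigr)(t) + |\text{Hess}(r)|^2(\gamma(t)) + \text{Ric}(\gamma'(t),\gamma'(t)) = 0.
\end{equation*}
Setting $u(t) = (\Delta_g r)(\gamma(t))$ and using the Cauchy–Schwarz bound $|\text{Hess}(r)|^2 \geq u(t)^2/(n-1)$ (from the fact that $\text{Hess}(r)$ lives on an $(n-1)$-dimensional subspace with trace $u$), the Ricci lower bound $\text{Ric} \geq -(n-1)K_2^2$ yields the scalar Riccati inequality
\begin{equation*}
u'(t) + \frac{u(t)^2}{n-1} \leq (n-1)K_2^2.
\end{equation*}
I would then compare $u$ with the model-space solution $\psi(t) = (n-1)K_2\coth(K_2 t)$, which satisfies the corresponding equality. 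Both $u$ and $\psi$ have the same asymptotic $(n-1)/t$ as $t\to 0^+$ (because in normal coordinates at $x_0$ the metric is Euclidean to leading order), so writing $w = u - \psi$ gives $w' \leq -\frac{u+\psi}{n-1}w$ with $\limsup_{t\to 0^+} w(t) \leq 0$, and a Grönwall-type argument yields $u \leq \psi$, i.e.\ the upper bound.

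For the lower bound the scalar argument is insufficient since a sectional curvature upper bound does not translate directly to Ricci information with the favourable sign. Instead, I would work with the matrix Riccati equation satisfied by the shape operator $S(t): \gamma'(t)^\perp \to \gamma'(t)^\perp$ defined by $S(t)X = \nabla_X \nabla r$: along $\gamma$,
\begin{equation*}
S'(t) + S(t)^2 + \mathcal{R}(t) = 0, \qquad \mathcal{R}(t)X = R(X,\gamma'(t))\gamma'(t).
\end{equation*}
The hypothesis that the sectional curvature is bounded above by $K_1^2$ translates to $\mathcal{R}(t) \leq K_1^2\,\text{Id}$ as symmetric operators. By the standard matrix Riccati (or equivalently Rauch) comparison theorem, the solution $S(t)$ is then bounded below in the operator-ordering sense by the scalar solution of $s' + s^2 + K_1^2 = 0$ with the correct $1/t$ singularity at $0$, namely $s(t) = K_1\cot(K_1 t)$. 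Taking the trace yields $\Delta_g r (\gamma(t)) = \text{tr}\,S(t) \geq (n-1)K_1\cot(K_1 t)$, which is the lower bound at $t = r(x)$.

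The principal technical obstacle is the matrix comparison step for the lower bound: unlike the trace-level scalar comparison used for the upper bound, it requires handling the full $(n-1)\times(n-1)$ Riccati system together with the singular initial behaviour $S(t) \sim \frac{1}{t}\,\text{Id}$. The cleanest way to circumvent this is to establish it via Jacobi fields and Rauch's theorem, transferring the lower bound on the growth of the field norms into an eigenvalue lower bound on $S(t)$. The restriction to the interior of the cut locus is essential throughout: beyond it, $r$ loses smoothness, minimizing geodesics may no longer be unique, and both the Bochner computation and the Riccati ODE cease to be meaningful in the classical pointwise sense.
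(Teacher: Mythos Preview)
The paper does not give a proof of this proposition: it is stated with a citation to Hsu (Theorem 3.4.2, p.~90) and used as a black box in the proof of Proposition~\ref{prop:ComparaisonPrinciple}. Your sketch is the standard textbook argument---Bochner plus the scalar Riccati inequality for the upper bound under the Ricci lower bound, and the matrix Riccati (equivalently Rauch) comparison for the lower bound under the sectional-curvature upper bound---and it is correct as outlined.
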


\begin{proof}[Proof of Proposition \ref{prop:ComparaisonPrinciple}]
    \begin{enumerate}
        \item There exists a maximizer of $u$ thus, it is a straightforward application of the maximum principle.
        \item 
        Take $u$ a bounded $C^{1,2}$ sub-solutions of the parabolic equation, satisfying \eqref{def:ComparaisonPrinciple}. Let $0<\lambda < \underset{(t,x)\in M_T}{\inf}r(t,x)$, take $\psi^\lambda$ satisfying \eqref{hyp:EllipCoerc}, let $\delta > 0$, and, arguing by contradiction, suppose that:
        \begin{equation*}
                \sup_{(t,x) \in M_T} \underbrace{u(t,x) - \delta\left(\psi(x) + \frac{1}{t}\right)}_{\defeq J(\delta, t,x)}> 0.
        \end{equation*}
        Since, $u$ is bounded, and $\psi$ is coercive, and since they are continuous, we know that there exists a maximizer $(\Bar{t}, \Bar{x}) \in M_T$, since :
        \begin{equation*}
            \lim_{t \xrightarrow[]{} 0^+} J(\delta,t,x) = -\infty,
        \end{equation*}
        and the fact that $J(\delta, T, x)\leq 0, \ \forall \delta > 0, \ \forall x\in M$ ensures that $(\Bar{t}, \Bar{x}) \in (0,T)\times M$. 
        
        Now,
        \begin{align*}
            0  & < r(\Bar{t}, \Bar{x})\Big(u(\Bar{t}, \Bar{x}) - \delta\left(\psi(x) + \frac{1}{t}\right)\Big),\\
            & \leq (\partial_t u  + \Delta_g u)(\Bar{t}, \Bar{x})  - \delta r(\Bar{t}, \Bar{x})\left(\psi(x) + \frac{1}{t}\right),\\
            & \leq -\frac{\delta}{\Bar{t}^2} -  \frac{\delta r(\Bar{t}, \Bar{x})}{\Bar{t}}+ \Delta_g u(\Bar{t}, \Bar{x}) - \delta \Delta_g \psi(\Bar{x}) + \delta(\lambda - r(\Bar{t}, \Bar{x}))\psi(\Bar{x},)\\
            & \leq \Delta_g u(\Bar{t}, \Bar{x}) - \delta \Delta_g \psi(\Bar{x}) + \delta(\lambda - r(\Bar{t}, \Bar{x}))\psi(\Bar{x}).
        \end{align*}

       Using the first and second-order optimality condition on the Laplace Beltrami Operator in any local chart around $\Bar{t}, \Bar{x}$, one deduces that :
       
       \begin{equation*}
           \Delta_g u (\Bar{t}, \Bar{x}) - \delta \Delta_g \psi(\Bar{x}) = -g^{ij}\Gamma^k_{ij}\underbrace{\partial_i(u-\delta \psi^\lambda)}_{=0} + g^{ij}\partial_{ij}(u-\delta \psi^\lambda) \leq 0.
       \end{equation*}

        We can always suppose that $\psi^\lambda$ is positive because since it is coercive $(\inf_{M} \psi^\lambda ) \in \dR $ and thus,

        \begin{equation*}
             \widetilde{\psi}^\lambda(x) = \psi^\lambda(x)+ |\inf_{M} \psi^\lambda|,
        \end{equation*}

        still satisfies \eqref{hyp:EllipCoerc} and is positive.

        Finally,

        \begin{equation*}
            \delta(\lambda - r(\Bar{t}, \Bar{x}))\psi(\Bar{x}) \leq 0,
        \end{equation*}

        yields the desired contradiction. We conclude that,

        \begin{equation*}
            u(T,\cdot) \leq 0 \ \ \ \forall x \in M \implies u(t,x) \leq \delta(\psi^\lambda(x) + \frac{1}{t}) \ \ \ \ \forall \delta > 0, x \in M, t\in (0,T].
        \end{equation*}

        Passing to the limit as $\delta \xrightarrow[]{} 0^+$, yields the maximum principle.

        \item If the sectional curvature is everywhere negative, the injectivity radius is $+\infty$ everywhere, thus $\forall x_0 \in M$, $x\mapsto d^2(x_0,x)$ is $C^\infty(M)$ and $x\mapsto d(x_0,x)$ is $C^\infty(M \backslash \{x_0\})$. Define the following auxiliary function $\phi\in C^{2}_b(\dR_+,\dR_+)$ as,
        \begin{equation*}
          \phi (r) =    \begin{cases}
                            -\frac{r^4}{8} + \frac{3r^2}{4} \ \ &(0,1), \\
                            r - \frac{3}{8} \ \ &  [1,+\infty).\\
                        \end{cases}  
        \end{equation*}
        
        Take an arbitrary $x_0 \in M$, note $r(x) = d(x_0, x)$, and define $\psi(x) = \phi(r(x))$, then $\psi \in C^2(M)$, and:
        \begin{equation*}
            \Delta_g\psi(x) = \Delta_g r(x) \phi'(x) + |\nabla r(x)|^2 \phi''(r(x)),
        \end{equation*}
        using Theorem \ref{thm:LaplacianDistBoundedCurve}, we obtain:
        \begin{align*}
            -\Delta_g \psi &\geq -(n-1)K \phi'(r(x))coth(K r(x)) + |\nabla r(x)|^2 \phi''(r(x)),\\
            & \geq - C_K.
        \end{align*}
        
        Thus, for any $\lambda > 0 $, 

        \begin{equation*}
            \psi^\lambda(x) = \psi(x) + \frac{C_K}{\lambda},
        \end{equation*}

        satisfies \eqref{hyp:EllipCoerc}, we can conclude using 2.
        
       \item
            Fix some $x^* \in M$, and let $\Tilde{d}$ be the regularised distance function of Lemma \ref{lem:regulDist}, let $\psi : x \mapsto \Tilde{d}(x^*,x)$.
            For any $\lambda > 0$, define $\psi^\lambda(x) = \psi(x) + \frac{dC}{\lambda}$, where $C$ is the constant in the previous lemma for $n = 2$.
            Now note that $\psi^\lambda$ is positive, and coercive, in the sense of the third point. Finally, since
            \begin{equation*}
                \|\Delta_g \psi^\lambda\|_\infty \leq d\||\nabla^2 \psi^\lambda|_g\|_\infty\leq dC.
            \end{equation*}
            One can check that it is indeed a super solution of the desired elliptic equation.
            By applying the second point, we obtain the comparison principle.        
    \end{enumerate}
\end{proof}

\subsection{Weak flow solutions to the Fokker-Planck equation.}
We now introduce the concepts needed to define the notion of solution considered for the Fokker-Planck equation.

\begin{definition}[Wasserstein Spaces]
    Let $(M,d)$ be a complete separable metric space, note $\mathcal{B}$ its borelians. Define,
    \begin{equation*}
        \mathcal{P}_p(M) = \{ \mu \in \mathcal{P}(M), \exists x_0 \in M, \int_M d^p(x_0, x)\mu(dx) < +\infty  \},
    \end{equation*}
    the set of $p$-integrable probabilities on $(M,\mathcal{B})$, and define $\mathcal{W}_p$ as the p-Wasserstein distance on $\mathcal{P}_p(M)$ as,
    \begin{equation*}
        \mathcal{W}_p(\nu,\mu)^p = \inf_{\pi \in \Pi(\mu,\nu)} \int_{M\times M} d^p(x,y)\pi(dx,dy).
    \end{equation*}

    Then, $(\mathcal{P}_p(M), \mathcal{W}_p)$ is a complete metric space (see appendix for more details).
\end{definition}

\begin{definition}
    A \it{probability flow} solution associated to $B \in L^\infty([0,T],C^1_b(T^1M))$, is a probability flow $m \in C([0,T], \mathcal{P}(M))$, such that for any test function $\varphi \in C^{1,2}_b([0,T]\times M)$ and for all $t \in [0,T]$,
    \begin{equation*}
        \int_M \varphi(t,x)m_t(dx) - \int_M \varphi(0,x) m_0(dx) = \int_0^t\int_M (\partial_t \varphi + B \cdot \nabla_g \varphi + \frac{1}{2}\Delta_g \varphi)dm_s ds.
    \end{equation*}
\end{definition}

We now prove existence and uniqueness of such solution.
\begin{theorem}
    \label{thm:ProbaFlowSynthese}
     Let $(M,g)$ be a connected manifold of bounded geometry, suppose that the drift is in $L^\infty([0,T],C^1_b(T_1M))$, and that $m_0 \in \mathcal{P}_2(M)$.
     
     Then, there exists a weak solution in $C([0,T],\mathcal{P}_1(M))$, with Hölder estimate,
    \begin{equation*}
        \mathcal{W}_1(m_s,m_t) \leq C \sqrt{t-s},
    \end{equation*}
    and it is bounded in $\mathcal{P}_2(M)$, \textit{i.e} there exists $x_0 \in M$ such that,
    \begin{equation*}
        \sup_{t \in [0,T]} \int_M d^2(x_0,x)m_t (dx) < C,
    \end{equation*}
    with $C$ depending on the geometry of $M$, $\| |B|_g \|_\infty $, and $\int_M d^2(x_0,x)m_0(dx)$.
    
    Furthermore, if the drift is in $C_b^{(\alpha, \alpha/2)}([0,T]\times TM)$ for some $0<\alpha<1$, then the solution is unique.
\end{theorem}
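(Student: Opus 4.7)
The approach is probabilistic: construct the flow as the law of the diffusion introduced in Section 2.3 and deduce the estimates by Itô calculus. Since $B \in L^\infty([0,T], C^1_b(T^1M))$, the existence/uniqueness theorem of \cite{rakotomalala2024practical} produces a strong solution $(U_t)_{t\in[0,T]}$ on the frame bundle $OM$ of the SDE \eqref{Eq:TOM}. Setting $X_t \defeq \pi(U_t)$ and $m_t \defeq \mathcal{L}(X_t)$, Itô's formula applied to $\varphi(t,X_t)$ for $\varphi \in C^{1,2}_b([0,T]\times M)$ (exactly as carried out in Section 2.3) followed by Fubini and taking expectation delivers the weak flow identity. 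So for existence it suffices to establish the two quantitative estimates; the continuity $m \in C([0,T],\mathcal{P}_1(M))$ will then be a direct consequence of the $\mathcal{W}_1$-Hölder bound.

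For the $\mathcal{P}_2$ bound, fix $x_0 \in M$ and let $\Tilde{d}$ be the regularized distance from Lemma \ref{lem:regulDist}. Itô applied to $y \mapsto \Tilde{d}^2(x_0,y)$ yields
\begin{equation*}
    \dE[\Tilde{d}^2(x_0,X_t)] = \Tilde{d}^2(x_0,X_0) + \int_0^t \dE\big[(B\cdot\nabla_g + \tfrac{1}{2}\Delta_g)\Tilde{d}^2(x_0,\cdot)(X_u)\big]\,du,
\end{equation*}
and the uniform bounds on $|\nabla_g\Tilde{d}|_g$ and $|\Delta_g\Tilde{d}|$ (from Lemma \ref{lem:regulDist}) together with $\||B|_g\|_\infty < \infty$ control the integrand by $C(1+\Tilde{d}(x_0,X_u))$; Grönwall then yields a uniform-in-$t$ bound, and $|d-\Tilde{d}|<\epsilon$ transfers it to $d^2$. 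For the Hölder estimate I would start from $\mathcal{W}_1(m_s,m_t) \leq \dE[d(X_s,X_t)]$ and condition on $\mathcal{F}_s$. Let $r_0>0$ be a uniform lower bound on the injectivity radius (given by bounded geometry) and set $\tau \defeq \inf\{u>s : d(X_s,X_u) \geq r_0/2\}$. On $[s,\tau]$ the process $Y_u \defeq \exp_{X_s}^{-1}(X_u)$ is well defined in the normal chart centered at $X_s$, and bounded geometry ensures that it satisfies an Euclidean SDE with drift and diffusion coefficients bounded uniformly in $X_s$; classical Euclidean moment estimates and Doob's inequality then give $\dE[|Y_{t\wedge\tau}|^2 | \mathcal{F}_s] \leq C(t-s)$ and $\Prob(\tau \leq t | \mathcal{F}_s) \leq C(t-s)/r_0^2$. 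Splitting $\dE[d(X_s,X_t)|\mathcal{F}_s]$ along $\{\tau>t\}$ and $\{\tau\leq t\}$, bounding the first piece by $\dE[|Y_{t\wedge\tau}|^2|\mathcal{F}_s]^{1/2}$ and applying Cauchy--Schwarz together with the $\mathcal{P}_2$ estimate on the second, yields $\mathcal{W}_1(m_s,m_t) \leq C\sqrt{t-s}$.

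For uniqueness under the stronger assumption $B \in C_b^{(\alpha/2,\alpha)}([0,T]\times TM)$, the plan is a duality argument. Let $m^1,m^2$ be two weak flow solutions with the same initial datum. For arbitrary $t \in (0,T]$ and terminal data $f \in C_b^{2+\alpha}(M)$, Proposition \ref{th:parabolicReg} (adapted trivially to carry the factor $\tfrac{1}{2}$ in front of $\Delta_g$, and applied on $[0,t]$) produces a solution $\varphi \in C_b^{((2+\alpha)/2,\,2+\alpha)}([0,t]\times M)$ of the backward problem
\begin{equation*}
    \partial_s \varphi + B\cdot \nabla_g \varphi + \tfrac{1}{2}\Delta_g \varphi = 0 \text{ on } [0,t]\times M, \qquad \varphi(t,\cdot) = f.
\end{equation*}
This $\varphi$ lies in $C^{1,2}_b$ and is therefore an admissible test function; using it in the weak formulation of the Fokker--Planck equation for $m^i$ makes the right-hand side vanish, so that $\int_M f\,dm_t^i = \int_M \varphi(0,\cdot)\,dm_0$ for $i=1,2$. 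The right-hand side does not depend on $i$, hence $\int f\,dm_t^1 = \int f\,dm_t^2$ for every $f \in C_b^{2+\alpha}(M)$, and because this class separates Borel probability measures on $M$ we conclude $m_t^1 = m_t^2$.

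The main obstacle is the Hölder estimate. On a general Riemannian manifold the distance function $d(x_0,\cdot)$ is not $C^2$ (the cut locus obstructs this), so one cannot apply Itô directly to $d^2$; and a naive use of the regularized $\Tilde{d}$ introduces an additive constant $\epsilon$ that destroys the small-time $\sqrt{t-s}$ rate. Switching to a normal chart centered at $X_s$ and using bounded geometry to control the SDE coefficients \emph{uniformly} in the starting point is the key device that restores the sharp square-root exponent.
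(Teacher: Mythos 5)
Your proposal follows the same skeleton as the paper's proof: existence by constructing the diffusion via \cite{rakotomalala2024practical} (after lifting the initial condition to $OM$) and applying It\^o's formula, then uniqueness by the duality argument with the backward parabolic equation from Proposition \ref{th:parabolicReg}. The differences lie in where the work is placed. For the moment and H\"older estimates, the paper simply invokes ``the stochastic flow estimate from \cite{rakotomalala2024practical}'', whereas you give a self-contained derivation: Gr\"onwall on $\Tilde{d}^2$ for the $\mathcal{P}_2$ bound (note you still need a localization to apply It\^o to the unbounded function $\Tilde{d}^2$, and an expectation is missing on the initial term), and a stopping-time/normal-chart argument for the $\sqrt{t-s}$ rate. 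That argument is sound --- bounded geometry does give uniform control of the SDE coefficients in normal charts of radius $r_0/2$, and your splitting along $\{\tau>t\}$ and $\{\tau\le t\}$ recovers the sharp exponent that a naive use of $\Tilde{d}$ would lose --- so this is a legitimate, more explicit route for that part. Conversely, in the uniqueness step you assert without justification that $C_b^{2+\alpha}(M)$ separates Borel probability measures on $M$; this is precisely the point the paper does \emph{not} take for granted (it notes it could not find a reference for the manifold case) and proves via mollified indicators in normal coordinates, a $\pi$-system of small open sets, and Dynkin's $\pi$-$\lambda$ theorem. The claim is true and the proof is standard, but as written your argument has a (small, fillable) hole exactly where the paper is most careful; you should either supply that mollification argument or reduce to bounded Lipschitz functions, which are classically measure-determining on a metric space.
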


\begin{proof}
    According to Lemma \ref{lem:existenceOMrdmvar} in the appendix, we can always construct a probability space to apply the existence result from~\cite{rakotomalala2024practical} with $A = Id_{TM}$ and yield existence of a process $(X_\cdot)$, projection on $M$ of a \textit{moving frame solution} $(U_\cdot)$ taking values on the orthonormal frame bundle. Applying Ito's Lemma, \textit{e.g} \cite{rakotomalala2024practical}, for any $\varphi \in C^{1,2}_b([0,T]\times M)$,
    \begin{equation*}
          \varphi(t, X_t) - \varphi(0, X_0) = \int_0^t(\partial_t \varphi + B \cdot \nabla_g \varphi + \frac{1}{2}\Delta_g\varphi)(u,X_u) du + M_t,
    \end{equation*}
    where $M_t$ is a martingale starting at zero.
    
    This gives,

    \begin{equation*}
        \dE[\varphi(t, X_t)] - \dE[\varphi(0, X_0)] = \dE[\int_0^t(\partial_t \varphi + B \cdot \nabla_g \varphi + \frac{1}{2}\Delta_g\varphi)(u,X_u) du].
    \end{equation*}

    Thus considering $m_t = \mathcal{L}(X_t)$, that is the law of the continuous process, we obtain $m\in C([0,T],\mathcal{P}(M))$ is a weak flow solution.
    Finally, using the stochastic flow estimate from \cite{rakotomalala2024practical}, one verifies the Hölder and moment estimates.
    
    Uniqueness is left to prove. Let $t \in [0,T]$ and take a test function $\varphi \in C_b^{(2+\alpha)}(M)$. Since the drift is in the space $C_b^{(\alpha, \alpha/2)}([0,T]\times TM)$, from Proposition \ref{th:parabolicReg}, there exists a solution $\psi \in C_b^{(\alpha+2, \alpha/2 + 1)}(M\times J)$ of :

    \begin{equation*}
        \begin{cases*}
            \partial_t \psi + B\cdot \nabla_g\psi + \Delta_g \psi = 0 \ \ \ \ M\times [0,t), \\
            \psi(t,\cdot) = \varphi.
        \end{cases*}
    \end{equation*}
    Take $m^1$ and $m^2$ two weak solutions, starting from the same probability measure $m_0$, we have :

    \begin{equation*}
        \int_M \psi(t,x) m^1_t(dx) = \int_M \psi(0,x) dm^1_0(dx) + \int_0^t\int_M (\partial_t \psi + B\nabla_g\psi + \Delta_g \psi)dm^1_s ds = \int_M \psi(0,x) dm^1_0(dx)
    \end{equation*}

    Since $\psi$ is a solution to a parabolic equation. We have the same equation with $m^2$, and since $m^1_0 = m^2_0$, we can conclude that, $\int_M \psi(t,x) m^1_t(dx) =\int_M \psi(t,x) m^2_t(dx)$, and by construction of $\psi$,
    \begin{equation}
        \label{eq:m1m2agree}
        \int_M \varphi dm^1_t =\int_M \varphi dm^2_t,
    \end{equation}
    for any $\varphi \in C_b^{2+\alpha}(M)$. We now prove why this implies that $m^1 = m^2$. Note that this would be immediate in $\dR^d$, but since we could not find a proper reference for this specific case we give the following details.
    
    Define $\mathcal{C}$ as, $\mathcal{C} = \{ O \text{ open}, \exists x \in M \text{ s.t } O \subset B(x, \epsilon/3) \}.$ It is straightforward to prove that it is a $\pi$-system. For any $O \in \mathcal{C}$, take any $x \in M$ such that $O \subset B(x,\epsilon/3)$, and take a normal coordinate system at $x$ up to $\epsilon$, it exists since we supposed that $\epsilon < i(M)$, and introduce the regularised sequence of function,
    \begin{equation*}
        \varphi^n = (\mathbbm{1}_{\{\cdot \in \phi(O)\}} * f^n) \circ \phi^{-1}, 
    \end{equation*}
    where $f^n$ is a sequence of mollifier in $\dR^d$ with support in $B_{\dR^d}(0,\epsilon/2)$, so that $\text{supp} \varphi^n \ssubset B(x,\epsilon)$, $\varphi^n$ can be extended in a $C^\infty$-manner to $M$ by setting it $0$ outside of $B(x,\epsilon)$. We also have that $\sup_n \|\varphi^n\|_\infty \leq 1$, the simple convergence $\varphi^n(x) \xrightarrow[n\mapto \infty]{} \mathbbm{1}_{x\in O}$, and $\varphi^n \in C^{\infty}_b(M)$. So that, we can use \eqref{eq:m1m2agree} for any $n$, and pass to the limit using dominated convergence theorem,
    and conclude that, $m^1_t(O) = m^2_t(O)$ for any $O$ in the $\pi$-system $\mathcal{C}$ and any $t\in [0,T]$. From Dynkin's $\pi-\lambda$-theorem, we obtain that $m^1_t$ and $m^2_t$ agree on the $\sigma$-algebra generated by $\mathcal{C}$. It rests to show, that this $\sigma$-algebra contains any open set.
    Let $\{O^\alpha\}_{\alpha \in \mathbb{N}}$ be a countable topological basis of $(M,g)$, we can assume that for any $\alpha \in \mathbbm{N}$, that the closure of $O^\alpha$ is compact, thus for any $\alpha$ there exists a finite collection $x_1,\cdots, x_{n_\alpha} \in M$, so that $\{B(x_i,\epsilon/2)\}_{i = 1}^{n_\alpha}$ covers $O^\alpha$, and define $\{ V^{\alpha, i}\}_{i = 1}^{n_\alpha}$ as,
    \begin{equation*}
        V^{\alpha, 1} = O^\alpha \cap B(x_1,\epsilon/2), \ \ V^{\alpha, k} = (O^\alpha \cap B(x_k,\epsilon/2)) \big/ (\bigcup_{i=1}^{k-1}V^{\alpha, i}).
    \end{equation*}
    So that, $\{V^{\alpha, i}\}$ is still a countable basis and $V^{\alpha,i} \in \mathcal{C}$. This implies that, $\sigma(\mathcal{C})$ coincide with the Borel $\sigma$-algebra. And thus, that $m^1_t = m^2_t$.
\end{proof}

\begin{lemma}(Compactness of Probability flow)
        \label{lem:CompactProbFlow}
        Suppose $(M,g)$ is a complete metric space, 
        \begin{equation*}
            \mathcal{K}(C_1) = \{ \mu\in C([0,T],\mathcal{P}_1(M)), \mathcal{W}_1(\mu_s,\mu_t) \leq C_1\sqrt{t-s}, \exists x_0 \in M, \int_M d^2(x_0,y)d\mu_t(y) < C_1\}
        \end{equation*}
        is convex and compact in $(C([0,T],\mathcal{P}_1(M)),\mathcal{D}(\mu,\nu) \defeq \sup_{s\in [0,T]}\mathcal{W}_1(\mu_s,\nu_s))$
\end{lemma}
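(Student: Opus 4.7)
The plan is to split the statement into two independent verifications: convexity, which reduces to elementary manipulations of probabilities and of $\mathcal{W}_1$, and compactness, for which I would invoke an Arzelà--Ascoli argument adapted to the Wasserstein setting. For convexity, given $\mu, \nu \in \mathcal{K}(C_1)$ and $\lambda \in [0,1]$, the convex combination $\lambda \mu + (1-\lambda)\nu$ satisfies the Hölder estimate by the joint convexity of $\mathcal{W}_1$, and the second moment bound by linearity of the integral (after possibly changing reference point via the inequality $d^2(x_1,y) \leq 2 d^2(x_1,x_0) + 2 d^2(x_0,y)$, which affects only the constant).

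The compactness step rests on two ingredients. First, the Hölder bound $\mathcal{W}_1(\mu_s,\mu_t) \leq C_1\sqrt{t-s}$ directly supplies equicontinuity in $t$ with respect to $\mathcal{D}$. Second, for each fixed $t$ the slice $\{\mu_t : \mu \in \mathcal{K}(C_1)\}$ is relatively compact in $(\mathcal{P}_1(M), \mathcal{W}_1)$. Relative compactness in $\mathcal{W}_1$ is characterized by tightness together with uniform integrability of the distance function, and the uniform second moment bound provides both: tightness follows from Markov's inequality combined with the fact that closed balls in a complete Riemannian manifold are compact (Hopf--Rinow), and uniform integrability follows from
\begin{equation*}
    \int_{\{d(x_0,y)>R\}} d(x_0,y)\,d\mu_t(y) \leq \frac{1}{R}\int_M d^2(x_0,y)\,d\mu_t(y) \leq \frac{C_1}{R}.
\end{equation*}
Granted these two ingredients, a Cantor diagonal extraction along a countable dense subset $\mathbb{Q}\cap [0,T]$ yields a subsequence converging pointwise in $\mathcal{W}_1$ on that dense set, and equicontinuity then upgrades this to uniform convergence on $[0,T]$ via a finite covering argument. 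The limit inherits the Hölder estimate from the lower semicontinuity of $\mathcal{W}_1$ along weak limits, and the second moment bound from Fatou's lemma applied to the nonnegative function $y \mapsto d^2(x_0,y)$.

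The main obstacle, in my view, is the delicate reconciliation between the strict inequalities ``$<C_1$'' appearing in the definition of $\mathcal{K}(C_1)$ and the fact that pointwise limits only preserve non-strict bounds; the natural reading is therefore with non-strict inequalities, or, equivalently, one shows that the closure of $\mathcal{K}(C_1)$ is the relevant compact set. A secondary technical point is that the reference point $x_0$ in the moment bound is allowed to depend on $\mu$: along an extracting sequence $\mu^n$ with reference points $x_0^n$, one must fix a common reference, which can be done by noting that $\{x_0^n\}$ lies in a bounded (hence relatively compact, by completeness of $M$) subset and adjusting the constant via the triangle inequality above.
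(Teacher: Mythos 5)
Your argument is correct and follows essentially the same route as the paper, whose proof is a one-line citation of Arzelà--Ascoli together with the appendix compactness criterion (tightness plus uniform integrability of the distance from a uniform second moment, i.e.\ the case $r=2$, $p=1$) — exactly the two ingredients you spell out. Your additional remarks on the strict inequalities and on the $\mu$-dependence of the reference point $x_0$ are legitimate points that the paper's proof silently glosses over, but they do not change the substance of the argument.
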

\begin{proof}
    The result follows from Arzela-Ascoli Theorem and the compactness criterion Lemma \ref{lm:CompCritProb} of the appendix.
\end{proof}
\subsection{Analysis of second order mean field games with quadratic Hamiltonian.}

We now can formulate and prove the main theorem of this section.
    
\begin{theorem}
    Let M be a connected manifold of bounded geometry, and suppose that,
    \begin{assumptionp}{$\mathcal{H}_{MFG}$}
        $ m_0 \in \mathcal{P}_2(M), \exists \alpha \in (0,1), \exists C_0>0$, such that $ \forall \mu,\nu \in \mathcal{P}_1(M):$
        \begin{align*}
                G : \mathcal{P}_1(M) \xrightarrow{} C^{2+\alpha}(M),\ & \|G(\mu)\|_{C^{2+\alpha}} < C_0, & \|G(\mu) - G(\nu)\|_{C^{2+\alpha}} \leq C_0\mathcal{W}_1(\mu,\nu),\\
                F : \mathcal{P}_1(M) \xrightarrow[]{} C_b^{\alpha}(M), \ & \|F(\mu)\|_{C^{\alpha}} < C_0, & \|F(\mu) - F(\nu)\|_{C^{\alpha}} \leq C_0\mathcal{W}_1(\mu,\nu).\\
        \end{align*}
    \end{assumptionp}
    Then, the system \eqref{eq:MFG} admits a solution $(v,m)$, where $v\in C_b^{(1+\alpha/2, 2+\alpha)}(J\times M)$ solution in the classical sense of the Hamilton-Jacobi-Bellman equation, and $m\in C([0,T],\mathcal{P}_1(M))$ is a weak flow solution of the Fokker-Planck equation.
\end{theorem}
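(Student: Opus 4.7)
The plan is to build a solution via a Schauder fixed point argument on the compact convex set $\mathcal{K}(C_1)$ of Lemma \ref{lem:CompactProbFlow}, using a Cole--Hopf linearization to produce the value function $v$ and then Theorem \ref{thm:ProbaFlowSynthese} to produce the corresponding probability flow $m$. Given $\mu \in \mathcal{K}(C_1)$, I would first define $v$ through the substitution $v = -2\log w$, which transforms the quadratic HJB equation into the linear backward parabolic problem
\begin{equation*}
    \partial_t w + \Delta_g w - \tfrac{1}{2} F(\mu_t)\, w = 0 \text{ on } [0,T)\times M, \qquad w(T,\cdot) = e^{-G(\mu_T)/2}.
\end{equation*}
Under $\mathcal{H}_{MFG}$, the coefficient $\tfrac{1}{2}F(\mu_\cdot)$ lies in $C_b^{(\alpha,\alpha/2)}(J\times M)$ (its $\alpha/2$-Hölder regularity in time is inherited from the Lipschitz assumption on $F$ combined with the $\tfrac{1}{2}$-Hölder regularity in $\mathcal{W}_1$ built into $\mathcal{K}(C_1)$), and $e^{-G(\mu_T)/2} \in C_b^{2+\alpha}(M)$ with norm controlled by $C_0$. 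The extension of Proposition \ref{th:parabolicReg} allowing a zeroth-order reaction term (mentioned in the remark following it) then yields a unique $w \in C_b^{(1+\alpha/2,2+\alpha)}(J\times M)$, while the $L^\infty$ bound on $F$ and $G$ together with the parabolic maximum principle from Subsection \ref{subsec:MaxPrinPara} furnishes two-sided strictly positive bounds on $w$ depending only on $C_0$ and $T$. Hence $v = -2\log w$ is a classical solution of the HJB equation satisfying a uniform bound $\|v\|_{C_b^{(1+\alpha/2,2+\alpha)}} \leq K$, where $K$ depends only on $C_0$, $T$ and the geometry of $M$.

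With the drift $B \defeq -\nabla_g v \in C_b^{(\alpha,\alpha/2)}(J\times TM)$ satisfying $\||B|_g\|_\infty \leq K$, Theorem \ref{thm:ProbaFlowSynthese} produces a unique weak probability flow $m \in C([0,T],\mathcal{P}_1(M))$ of the Fokker--Planck equation starting at $m_0$, together with the Hölder estimate $\mathcal{W}_1(m_s,m_t) \leq C\sqrt{t-s}$ and a uniform second moment bound where $C$ depends only on $K$, the geometry of $M$, and $\int_M d^2(x_0,\cdot)\, dm_0$. Setting $\Phi(\mu) \defeq m$ defines a map on $\mathcal{K}(C_1)$, and for $C_1$ chosen larger than this $C$ one obtains $\Phi(\mathcal{K}(C_1)) \subset \mathcal{K}(C_1)$.

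For continuity of $\Phi$, I would take $\mu^n \to \mu$ in $\mathcal{D}$: the Lipschitz assumptions in $\mathcal{H}_{MFG}$ give $F(\mu^n_\cdot) \to F(\mu_\cdot)$ in $C([0,T],C^\alpha(M))$ and $G(\mu^n_T) \to G(\mu_T)$ in $C^{2+\alpha}(M)$; applying the linear parabolic estimate to the difference $w^n - w$ transfers this convergence to $w^n \to w$ in $C_b^{(1+\alpha/2,2+\alpha)}$, hence $\nabla_g v^n \to \nabla_g v$ uniformly. Stability of the Fokker--Planck equation under uniform convergence of Hölder drifts (arguing as in the uniqueness proof of Theorem \ref{thm:ProbaFlowSynthese} by testing against a common smooth solution of the dual parabolic problem) then gives $\Phi(\mu^n) \to \Phi(\mu)$ in $\mathcal{D}$. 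Schauder's fixed point theorem produces $\mu = \Phi(\mu)$, and the pair $(v,m)$ built from this $\mu$ is the desired solution.

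The main obstacle is obtaining the uniform gradient bound $\|\nabla_g v\|_\infty \leq K$ on the possibly non-compact manifold $M$: the quadratic nonlinearity defeats any naive linear estimate, and this is precisely where the Cole--Hopf reduction combined with the maximum principle of Proposition \ref{prop:ComparaisonPrinciple} becomes essential, replacing the nonlinear HJB by a linear equation whose coefficients inherit geometry-independent bounds from $\mathcal{H}_{MFG}$. A secondary delicate point is to verify that the anisotropic time regularity required by Amann's framework is indeed supplied by the $\tfrac{1}{2}$-Hölder structure encoded in $\mathcal{K}(C_1)$, so that the Lipschitz spatial control on $F$ and $G$ translates into the $C^{(\alpha,\alpha/2)}$ and $C^{2+\alpha}$ data needed to invoke Proposition \ref{th:parabolicReg}.
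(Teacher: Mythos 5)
Your overall architecture coincides with the paper's: Schauder's fixed point theorem on the compact convex set $\mathcal{K}(C_1)$ of Lemma \ref{lem:CompactProbFlow}, the Cole--Hopf substitution reducing the quadratic HJB equation to a linear backward parabolic problem, Proposition \ref{th:parabolicReg} for existence and regularity, the maximum principle of Proposition \ref{prop:ComparaisonPrinciple} for the two-sided positive bounds on $w$, and Theorem \ref{thm:ProbaFlowSynthese} for the Fokker--Planck flow. Your continuity argument (a difference estimate for the linear equation plus a duality argument for Fokker--Planck stability) is a legitimate variant of the paper's route (compactness, subsequence extraction, passage to the limit in the weak formulation via Lemma \ref{lem:Talay}, then uniqueness).

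There is, however, a genuine gap at the step you yourself single out as the main obstacle: the uniform gradient bound. You assert $\|v\|_{C_b^{(1+\alpha/2,2+\alpha)}}\leq K$ with $K$ depending only on $C_0$, $T$ and the geometry, obtained from the maximal regularity estimate of Proposition \ref{th:parabolicReg}. But the constant in that estimate depends on the norms of the coefficients, and the zeroth-order coefficient $\tfrac12 F(\mu_t)$ has a time-H\"older seminorm controlled only by $C_0C_1$, since $\|F(\mu_t)-F(\mu_s)\|_{\infty}\leq C_0\,\mathcal{W}_1(\mu_t,\mu_s)\leq C_0C_1\sqrt{t-s}$ on $\mathcal{K}(C_1)$. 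Hence your $K$ depends on $C_1$, and the prescription ``choose $C_1$ larger than the resulting Fokker--Planck constant'' becomes circular: you need $C_1\geq C(K(C_1))$ with both sides increasing in $C_1$. The maximum principle does give the $L^\infty$ bounds on $w$ (hence on $u$) independently of $C_1$, but it does not by itself give the gradient bound. The paper closes precisely this gap with an argument you omit entirely: Bochner's formula applied to $w$, a Bernstein-type auxiliary function $\tfrac12|\nabla w|_g^2e^{\lambda t}$, the lower Ricci bound $-K_2$ supplied by bounded geometry, and the maximum principle again, producing $\||\nabla w|_g\|_\infty\leq C(C_0,K_2,T)$ with no dependence on $C_1$; only then can one fix $C_1$ so that $\Psi(\mathcal{K}(C_1))\subset\mathcal{K}(C_1)$. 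Your argument could be repaired by supplying some other $C_1$-independent gradient estimate (for instance, treating $\tfrac12 F(\mu_t)w$ as a bounded source), but as written the key quantitative step is asserted rather than proved.
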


\begin{proof}
    \par
    We find a fixed point using Schauder's fixed point theorem.
    Let $(m_t)_t \in \mathcal{K}$ be a probability flow, ($\mathcal{K}$ defined as in Lemma \ref{lem:CompactProbFlow}),
    and consider the following equation :
    \begin{equation}
        \label{eq:HJB}
        \begin{cases*}
            -\partial_t u - \Delta_g u + \frac{1}{2}|\nabla u|_g^2 = F(m_t) \\
            u(T, \cdot) = G(m_T),
            \tag{HJB}
        \end{cases*}
    \end{equation}

    Introduce the following Cole-Hopf transformation $w = e^{-u/2}$ and observe that, since :
    \begin{equation*}
        \Delta_g w = \frac{1}{2}w(-\Delta_g u + \frac{1}{2}|\nabla u|^2_g),
    \end{equation*}
    then $w$ solves the following linear backward parabolic equation :
    \begin{equation*}
        \label{eq:linpara}
        \begin{cases*}
            \partial_t w + \Delta_g w - \frac{1}{2}F(m_t)w = 0,\\
            w(T,\cdot )= e^{-G(m_T,\cdot)/2}.
        \end{cases*}
    \end{equation*}

    The fact that $(m_t)_t \in \mathcal{K}$ ensures that: 
    \begin{equation*}
        a_0 \defeq ((x,t)\mapsto F(m_t,x)),
    \end{equation*}
     is in the space $ C^{\alpha/2,\alpha}(M\times J)$, thus Theorem \ref{th:parabolicReg} gives existence and uniqueness of a classical solution of \ref{eq:linpara}, in $C_b^{(1+\alpha/2, 2+\alpha)}(J\times M)$, and thus of the Hamilton-Jacobi-Bellman equation. 
    
    We can apply the maximum principle to $\Tilde{w} = we^{\frac{1}{2}C_0 t}$, since it is bounded by the estimate in Proposition \ref{th:parabolicReg}, and it is a solution of :
    \begin{equation*}
        -\partial_t \Tilde{w} - \Delta_g \Tilde{w} + \frac{1}{2}(a_0 + C_0)\Tilde{w} = 0,  
    \end{equation*}
    with $r = \frac{1}{2}(a_0 + C_0) > 0$, with the sub-solution $(t,x)\mapsto e^{-\frac{1}{2}C_0(T-2t + 1)}$and the super-solution $(t,x)\mapsto e^{\frac{1}{2}C_0(T-t + 1)}$, one can deduce that : 
    \begin{equation*}
        e^{-\frac{1}{2}C_0 (T-t+1)}\leq  w(t,x) \leq e^{\frac{1}{2}C_0 (T - t +1)}
    \end{equation*}

    and thus,

    \begin{equation*}
        \sup_{(t,x)\in M_T} |u(t,x)| \leq C_0(T+1)
    \end{equation*}

    We now want a uniform estimate on the gradient. By a density argument \cite{amann2017cauchy}, suppose that $a_0: t,x \mapsto F(m_t,x)$ is in $C^{(1+\alpha)/2,1+\alpha}([0,T]\times M)$, and that the terminal condition is in $C^{3+\alpha}(M)$, so that we can differentiate at the third order, according to Proposition \ref{th:parabolicReg}, the solution $w$ of the linear parabolic equation.
    From Bochner's formula, applied to $w$, one gets,
    
    \begin{equation*}
        \frac{1}{2}\Delta_g |\nabla w|_g^2 = \langle \nabla \Delta_g w, \nabla w \rangle_g + |\nabla^2 w|^2_g + Ric(\nabla w, \nabla w).
    \end{equation*}
    Using the fact that $w$ is a solution of a parabolic equation,
    \begin{align*}
        0 &= \langle \nabla (-\partial_t w + \frac{a_0}{2} w), \nabla w \rangle -  \frac{1}{2}\Delta |\nabla w|^2 + |\nabla^2 w|^2 + Ric(\nabla w, \nabla w)\\
        0 &= -\frac{1}{2} \partial_t |\nabla w|^2 + \frac{a_0}{2} |\nabla w |^2 + \frac{1}{2}w \langle \nabla a, \nabla w\rangle -  \frac{1}{2}\Delta |\nabla w|^2 + |\nabla^2 w|^2 + Ric(\nabla w, \nabla w)
    \end{align*}
    Since $M$ is of bounded geometry, its Ricci curvature is uniformly bounded below on M,
    \begin{equation*}
        -K_2 |\nabla w|^2 \leq Ric(\nabla w, \nabla w)
    \end{equation*}

    Using Bernstein's method with the auxiliary function $\phi = \frac{1}{2}|\nabla w|^2e^{\lambda t}$, with $\lambda$ to we be specified later, we note that,

    \begin{align*}
        -\partial_t \phi -\Delta_g \phi + \lambda \phi & = -e^{\lambda t}\Big( \frac{a_0}{2} |\nabla w |^2 + \frac{1}{2}w \langle \nabla a, \nabla w\rangle + |\nabla^2 w|^2_g + Ric(\nabla w, \nabla w)\Big)\\
        & \leq  - a_0 \phi +  e^{\lambda t}(\frac{1}{2}\|w\|_\infty \||\nabla a|_g\|_\infty |\nabla w|_g + K_2 |\nabla w|^2_g)\\
        & \leq (\|a_0\|_\infty + 2K_2)\phi + \frac{1}{2}e^{\lambda t}\|w\|_\infty \||\nabla a|_g\|_\infty (1 + |\nabla w|^2_g)
    \end{align*}        
    We obtain,
    \begin{equation*}
        -\partial_t \phi -\Delta_g \phi + \underset{\defeq r}{\underbrace{(\lambda - \|a_0\|_\infty - 2K_2 - \|w\|_\infty \||\nabla a|_g\|_\infty)}} \phi  \leq  \frac{1}{2}e^{\lambda t}\|w\|_\infty \||\nabla a|_g\|_\infty.
    \end{equation*}        
    Fixing $\lambda > \|a_0\|_\infty - 2K_2 - \|w\|_\infty \||\nabla a|_g\|_\infty $, so that $r > 0$, we can verify that,
    \begin{equation*}
        \varphi = \phi - \frac{1}{2r} \frac{1}{2}e^{\lambda T}\|w\|_\infty \||\nabla a|_g\|_\infty,  
    \end{equation*}
    is a sub-solution to the parabolic equation,
    \begin{equation*}
        - \partial_t \varphi - \Delta_g \varphi + r \varphi \leq 0.
    \end{equation*}
    Subtracting $\frac{1}{2}\||\nabla G|^2_g\|_\infty e^{C_0}/2$, since from the estimate in Proposition \ref{th:parabolicReg} $\varphi$ is bounded, we conclude from the maximum principle (see Proposition \ref{def:ComparaisonPrinciple}) that there exists C, only depending on $C_0$, $K_2$, $T$ such that,
    \begin{equation*}
         \||\nabla w|_g\|_\infty < C.
    \end{equation*}
    We conclude by density the desired  estimate, and using the representation of $u$, we deduce that $\| |\nabla u|_g \|_\infty < C$, only depending on $C_0$ and $K_2$ and $T$.

    Since $w$ and thus $v$ are classical solutions according to Proposition \ref{th:parabolicReg}, the drift $-\nabla u$ satisfies the hypothesis of Theorem \ref{thm:ProbaFlowSynthese}, thus there exists a unique solution to the Fokker Planck equation that we note $\Psi(m)$. Furthermore, from the uniform bound on the norm of the gradient, it follows from Theorem \ref{thm:ProbaFlowSynthese} that for any $m\in \mathcal{K}$, the solution $\Psi(m)$ stays in $\mathcal{K}$, if we pick $C_1$ large enough. Thus the map,
    \begin{equation*}
        \Psi : \mathcal{K} \subset C([0,T], \mathcal{P}_1(M) ) \xrightarrow{}  \mathcal{K}, \\
    \end{equation*}
    is well-defined. Next, we show that $\Psi$ is continuous.

    Let $\mu^n \in \mathcal{K}$ be a sequence converging to $\mu \in \mathcal{K}$, denote by $u^n$(resp. $u$) the solutions to HJB equation associated to the $\mu^n$(resp. $\mu$), and $m^n = \Psi(\mu^n)$ (resp. $m = \Psi(\mu)$). The mapping $\Psi$ takes value in the compact $\mathcal{K}$, so we can extract a sub-sequence (still noted $(m^n)_n$) converging to some $\nu \in \mathcal{K}$. Take $\varphi \in C^\infty_c(M_T)$, and define the differential operator as $\mathcal{A}[b]\varphi = b^i\partial_i \varphi + \frac{1}{2}\Delta_g \varphi$,
    then we have the following convergence, uniformly on any compact set,
    
    \begin{equation*}
        \lim_{n} \sup_K |(\partial_t \varphi + \mathcal{A}[-\nabla u^n]\varphi) - (\partial_t \varphi + \mathcal{A}[-\nabla u]\varphi)| = 0 \ \ \ \ \forall K \subset \subset M.
    \end{equation*} 

    Indeed, first we notice that,
    \begin{align*}
        |(\partial_t \varphi + \mathcal{A}[-\nabla u^n]\varphi) - (\partial_t \varphi + \mathcal{A}[-\nabla u]\varphi)| \leq |\nabla u^n - \nabla u|_g |\nabla \varphi |_g
    \end{align*}

    And since $\partial_{ij} u = \frac{\partial_{ij}w}{w} - \frac{\partial_iw \partial_j w}{w^2}$, we have that :

    \begin{equation*}
        (\nabla^2 u)_{ij} \eqcoord \partial_{ij}u -\Gamma^k_{ij}\partial _k u \eqcoord \frac{1}{w} (\partial_{ij}w - \Gamma^k_{ij}\partial_k w) -\frac{1}{w^2}\partial_iw \partial_j w \eqcoord \frac{1}{w}(\nabla^2 w)_{ij} - \frac{1}{w^2}\partial_iw \partial_j w 
    \end{equation*}
    
    Using that the norm of the $(0,2)-$tensor $(\partial_iw \partial_j w)$ is :
    
    \begin{equation*}
        g^{i_1j_1}g^{i_2 j_2} \partial_{i_1}w \partial_{j_1} w \partial_{i_2}w \partial_{j_2} w = |\nabla w|_g^4.     
    \end{equation*}

    And the triangular inequality on the tensor norm, we get that :
    \begin{equation*}
        |\nabla^2 u|_g \leq \frac{1}{w}|\nabla^2 w|_g + \frac{1}{w^2}|\nabla w|^2_g.
    \end{equation*}

    We deduce that, 
    
    \begin{equation*}
        \||\nabla^2 u^n|_g \|_\infty \leq C, 
    \end{equation*}

    uniformly in $n$, since $\|w\|_{bc^{2+s,1+s/2}}$ is uniformly bounded by a constant depending on the geometry and on $C_1$. This implies that the $(\nabla u^n)_n$ are uniformly continuous, and since they are uniformly bounded, by Arzela Ascoli theorem the sequence is compact on any compact subset of $M$, the simple convergence of the $\nabla u^n$ to $\nabla u$, is given by the continuity estimate in Proposition \ref{th:parabolicReg}. We obtain the uniform convergence on any compact subset of $M$ of $|\nabla u^n - \nabla u|_g$. Thus

    \begin{equation*}
        \Phi^n(t) = (\partial_t \varphi + \mathcal{A}[-\nabla u^n]\varphi)_t,
    \end{equation*}
    is uniformly continuous, bounded and converges uniformly on any compact subset of $M$ to 

    \begin{equation*}
        \Phi(t) = (\partial_t \varphi + \mathcal{A}[-\nabla u]\varphi)_t.
    \end{equation*}

    From Lemma \ref{lem:Talay}, defining $\Xi^n(t) = \int_M \Phi^n_t dm_t^n,$ and $\Xi(t) = \int_M\Phi_t d\nu_t$. We have the convergence for any $t\in [0,T]$ of $\Xi^n(t)$ to $\Xi(t)$.

    We can control $\Xi^n$ in time with,
    \begin{equation*}
        |\Xi^n(t) - \Xi^n(s)| \leq \|\Phi^n(t) - \Phi^n(s)\|_{M,\infty} + \mathcal{W}^1(m^n_t,m^n_s).
    \end{equation*}

    And since $\nabla^n u$ is uniformly bounded, and is in $BUC([0,T],C_b^\alpha(M))$, and $\varphi$ is regular, we have a uniform in time convergence of $\|\Phi^n(t) -\Phi^n(s)\|_{M,\infty}$. And $\mathcal{W}^1(m^n_t,m^n_s)$ is controlled by the estimate of theorem \ref{thm:ProbaFlowSynthese}, meaning that $(t \mapsto \Xi^n(t))$ is continuous in time, thus measurable, and uniformly bounded. From dominated convergence, we conclude that for any $t \in [0,T]$:
    \begin{equation*}
        \lim_n \int_0^t\int_M (\partial_t \varphi + \mathcal{A}[-\nabla^n u]\varphi)dm^n_s ds = \int_0^t\int_M (\partial_t \varphi + \mathcal{A}[-\nabla u]\varphi)d\nu_s ds.
    \end{equation*}

    And since the weak solution is unique: $\nu = m$. It is true for any subsequence, and the sequence takes values in the compact set $\mathcal{K}$, thus the sequence tends to $m$. Hence, $\Psi$ is continuous.
    
    Finally, from Schauder's fixed point theorem, there exists a solution to the system.
\end{proof}

    This study allows to highlight the technicalities encountered when studying mean field games systems on Riemannian manifolds. The analysis could be pursued further for Lipschitz Hamiltonian, and local couplings adapting the classical approach to the geometric case.

\section*{Acknowledgement}
The first author has been partially supported by the Chairs FDD (Institut Louis Bachelier) and FiME (Université Paris Dauphine - PSL et l’École Polytechnique) and the Lagrange Research Center in Paris.

\begingroup
\renewcommand{\thesubsection}{\Alph{subsection}}
\renewcommand{\thetheorem}{\thesubsection.\arabic{theorem}}
\renewcommand{\thedefinition}{\thesubsection.\arabic{definition}}
\renewcommand{\thedefinitionproposition}{\thesubsection.\arabic{definitionproposition}}
\renewcommand{\theproposition}{\thesubsection.\arabic{proposition}}

\appendix
\section*{Appendix}
\addcontentsline{toc}{section}{Appendix}

\subsection{Reminders in Riemannian geometry}
\begin{definition}[Smooth Manifold]
A d-dimensional manifold $M$ is second countable topological space, such that there exists an atlas $\{U_i, \phi_k\}_{i\in \mathfrak{K}}$, where the $U_i$ forms an open cover of $M$ and $\phi_i$ is a homeomorphism from $M$ to $\dR^n$. A couple $U, \phi$ is called a local chart.
\end{definition}

\begin{definition}[Tangent space]
Note $\mathcal{C}_x(M)$ the set of all smooth path $\gamma : \dR \supset U \xrightarrow[]{} M$ defined on $U$ a neigbourhood of $0$, s.t $\gamma(0) = x$. Define the following equivalent relation on $\mathcal{C}_x(M)$ :
\begin{align*}
    \gamma_1 \sim \gamma_2 & \Leftrightarrow \text{for all charts}\phi \text{ at }x, (\phi \circ \gamma_1)'(0) = (\phi \circ \gamma_2)'(0) \\
                            & \Leftrightarrow \text{there exist a chart}\phi \text{ at }x, (\phi \circ \gamma_1)'(0) = (\phi \circ \gamma_2)'(0) \\
\end{align*}
The tangent space $T_xM$ at x is defined as the quotient $T_xM = \mathcal{C}_x (M)/\sim$
\end{definition}

\begin{definition}[Vector Bundle]
A vector bundle of rank $k$ on a manifold $M$ consist of:
\begin{enumerate}
    \item a manifold $E$
    \item a continuous surjection $\pi : E \xrightarrow[]{} M$
    \item $\forall x \in M$, the fiber $\pi^{-1}(\{x\}) \eqdef E_x$ has a $k-$dimensional real vector space structure.
\end{enumerate}
Such that every point $x\in M$ admits a neighbourhood $U\subset M$ and an homeomorphism 
\begin{equation*}
    \phi : U \times \dR^k \xrightarrow[]{}\pi^{-1}(U),
\end{equation*}
such that $(\pi \circ \phi)(x,v) = x\ \ \ \ \forall v \in \dR^k$ and the map $v\mapsto \phi(x,v)$ is a linear isomorphism between the vector spaces $\dR^k$ and $\pi^{-1}(\{x\})$.
\end{definition}

\begin{definition}[Riemannian Metric]
A Riemannian metric $g$ assigns to each point $x\in M$ an inner product the tangent space at $x$, $g_x : T_xM \times T_xM \xrightarrow[]{} \dR $. In each charts, it is associated with a symmetric positive definite matrix $g_{ij}(x)$, which is supposed to be smooth.
The metric induces volume measure on $M$, $dvol = \sqrt{|\det g|}dx$.
\end{definition}

\begin{definition}[Connection]
    Let $\pi : E \mapto M$ be a smooth vector bundle over a smooth manifold M, and let $C^1(E)$ denote the space of $C^1$ sections of $E$. A \textit{connection} in $E$ is a map :

    \begin{equation*}
        \nabla : \mathcal{X}(M) \times C^1(E) \mapto C(E),
    \end{equation*}
    written $(X,Y)\mapsto \nabla_XY$, satisfying the following properties:
    \begin{enumerate}
        \item $\nabla_XY$ is linear over $C^\infty(M)$ in $X$, $\forall f_1, f_2 \in C^\infty(M)$ and $X_1,X_2 \in \mathcal{X}(M)$,
        \begin{equation*}
            \nabla_{f_1X_1+f_2X_2}Y =f_1\nabla_{X_1}Y+f_2\nabla_{X_2}Y
        \end{equation*}
        \item $\nabla_XY$ is linear over $\dR$ in $Y$, $\forall a_1, a_2 \in \dR$ and $Y_1, Y_2 \in C^1(E)$,
        \begin{equation*}
            \nabla_X(a_1Y_1+a_2Y_2) =a_1\nabla_XY_1+a_2\nabla_XY_2.
        \end{equation*}
        \item  $\nabla$ satisfies the following product rule, $\forall f \in C^\infty(M)$,
        \begin{equation*}
            \nabla_X(fY) = f\nabla_XY+(Xf)Y.
        \end{equation*}
    \end{enumerate}
\end{definition}

\begin{theorem}[Fundamental Theorem of Riemannian Geometry]
    Let $(M,g)$ be a Riemannian Manifold, there exist a unique connection that is \textbf{compatible} with the metric and \textbf{symetric}. It is called the Levi-Cevita connection. 
    
    In chart using the Christofel symbol, it writes,
    \begin{equation*}
        \Gamma^k_{ij} = \frac{1}{2} g^{kl} (\partial_i g_{jl} + \partial_j g_{il} - \partial_l g_{ij})
    \end{equation*}
\end{theorem}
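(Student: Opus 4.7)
The plan is the classical Koszul-formula argument, which simultaneously establishes uniqueness and, by reversing the derivation, existence; the chart expression for $\Gamma^k_{ij}$ then falls out as a direct computation in a coordinate frame.

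For uniqueness, I would suppose $\nabla$ is a connection on $TM$ that is metric compatible (so $X g(Y,Z) = g(\nabla_X Y, Z) + g(Y,\nabla_X Z)$) and symmetric, i.e.\ torsion-free ($\nabla_X Y - \nabla_Y X = [X,Y]$). Writing the metric-compatibility identity for the cyclic triples $(X,Y,Z)$, $(Y,Z,X)$, $(Z,X,Y)$ and then forming (first) + (second) $-$ (third), the six covariant-derivative terms pair up into three differences of the form $\nabla_\cdot\cdot - \nabla_\cdot\cdot$, which by symmetry become Lie brackets. What survives is Koszul's formula
\[
2g(\nabla_X Y, Z) = X g(Y,Z) + Y g(Z,X) - Z g(X,Y) - g(X,[Y,Z]) + g(Y,[Z,X]) + g(Z,[X,Y]).
\]
Since $g$ is pointwise non-degenerate, this determines $\nabla_X Y$ uniquely, proving uniqueness.

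For existence, I would turn Koszul's formula into a \emph{definition}: for fixed smooth vector fields $X,Y$, define a map $Z \mapsto \Phi_{X,Y}(Z)$ to be the right-hand side above. A direct computation shows $\Phi_{X,Y}$ is $C^\infty(M)$-linear in $Z$ (the derivatives $Z g(X,Y)$ and $(Zf) g(X,Y)$ generated when one substitutes $fZ$ cancel exactly against those produced by $[Y,fZ] = f[Y,Z] + (Yf)Z$ and its cyclic partner), so by non-degeneracy of $g$ it is represented by a unique vector field which I call $\nabla_X Y$. One then verifies the three connection axioms in the definition above: $C^\infty(M)$-linearity in $X$ and $\dR$-linearity in $Y$ follow by inspection, while the Leibniz rule $\nabla_X(fY) = f\nabla_X Y + (Xf)Y$ requires expanding $[fY,Z]$ and $[Z,fY]$ and collecting the stray $(Xf)g(Y,Z)$ terms produced by $X g(fY,Z)$. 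Metric compatibility and symmetry are checked by plugging back into Koszul's formula; both reduce to rearranging the same six cyclic terms.

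For the coordinate expression, apply Koszul with $X = \partial_i$, $Y = \partial_j$, $Z = \partial_l$. Since $[\partial_i,\partial_j] = 0$ for any pair of coordinate vector fields, Koszul collapses to
\[
2 g(\nabla_{\partial_i} \partial_j,\partial_l) = \partial_i g_{jl} + \partial_j g_{il} - \partial_l g_{ij}.
\]
Writing $\nabla_{\partial_i}\partial_j = \Gamma^m_{ij}\partial_m$ gives $2\Gamma^m_{ij} g_{ml} = \partial_i g_{jl} + \partial_j g_{il} - \partial_l g_{ij}$, and contracting with $g^{kl}$ yields the advertised formula.

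The argument contains no deep obstacle; the only point that demands care is the algebraic bookkeeping in the existence step, specifically checking the tensoriality of $\Phi_{X,Y}(Z)$ in $Z$ and the Leibniz rule in $Y$. Both reduce to tracking where factors of $f$, $Xf$, $Yf$, $Zf$ arise when one distributes Lie brackets such as $[fY,Z] = f[Y,Z] - (Zf)Y$, and confirming that each such stray derivative is cancelled by the corresponding $X g(fY,Z)$ contribution. This is purely formal, with no Riemannian-geometric subtlety, and completes the proof.
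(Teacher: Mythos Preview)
Your argument via Koszul's formula is correct and is the standard proof of this classical result. Note, however, that the paper does not actually supply a proof: the theorem appears in the appendix under ``Reminders in Riemannian geometry'' and is simply stated without demonstration, so there is nothing to compare against. Your write-up would serve perfectly well as a proof were one required.
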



\begin{definition}[Riemann Curvature Tensor]
    Let $X,Y,Z\in C^2(TM)$ be three vector fields on $M$. We define the \textit{Riemann curvature tensor} with the following formula,
    \begin{equation*}
        R(X,Y)Z = \nabla_X\nabla_Y Z - \nabla_Y\nabla_X Z - \nabla_{[X,Y]}Z,
    \end{equation*}
    where $[X,Y]$ is the Lie bracket of the vector fields.
    In coordinates it writes,
    \begin{equation*}
        R^l_{ijk} = \partial_j\Gamma^l_{ik} - \partial_k \Gamma^l_{ij} + \Gamma^l_{jm} \Gamma^m_{ik} - \Gamma^l_{km} \Gamma^m_{ij}
    \end{equation*}
\end{definition}

From the Riemann curvature tensor we can introduce, the \textit{Ricci curvature} tensor, it is defined as the contraction of the Riemann tensor. In coordinate it writes,
\begin{equation*}
    \text{Ric}_{ij} = R^k_{ikj}.
\end{equation*}
We say that the Ricci curvature tensor is bounded below, if there exists a scalar $\lambda > 0$, such that $\forall X\in C(TM)$,
\begin{equation*}
    -\lambda g(X,X) \leq \text{Ric}(X,X) \text{ (resp. }\text{Ric}(X,X) \leq \lambda g(X,X) \text{)},
\end{equation*}
or equivalently in coordinates, $-\lambda g_{ij} \leq \text{Ric}_{ij}$ in the sens of positive definite matrices.

We also introduce, for $u, v \in T_{\{x\}}M$ two linearly independent tangent vectors in the tangent space at a point $x$, the \text{sectional curvature} as,
\begin{equation*}
    \frac{g_x(R_x(u,v),u)}{g_x(u,u)g_x(v,v) - g_x(u,v)^2}.
\end{equation*}

Finally, recall that the covariant derivative of a tensor is induced from the connection on the tangent space, it is defined as follows, for a tensor field $A \in C^1(T^n_mM)$, the covariate derivative is a $(n,m+1)$-tensor, defined in coordinate as,
\begin{align*}
    (\nabla A) \eqcoord \partial_i a^{k_1 \cdots k_n}_{j_1 \cdots j_m} & + \Gamma^{k_1}_{il}a^{lk_2 \cdots k_n}_{j_1 \cdots j_m} + \ldots + \Gamma^{k_n}_{il}a^{k_1 \cdots k_{n-1}l}_{j_1 \cdots j_m}  - \Gamma^{l}_{j_1 i} a^{k_1 \cdots k_n}_{lj_2 \cdots j_m} -\ldots - \Gamma^{l}_{j_n i} a^{k_1 \cdots k_n}_{j_1 \cdots j_{m-1}l}.
\end{align*}

\subsection{Probability Measure Spaces}
The following lemmata are classical, for the sake of completeness, we give their proof in the current setting.
\begin{lemma}
    \label{lem:Talay}
    Let $(E,d)$ metric separable complete (Polish space), note $\mathcal{B}$ its Borelians. Let $\Prob_n, \Prob\in \mathcal{P}(E,\mathcal{B})$, if $\Prob_n$ converge weakly to $\Prob$ and if $\Phi^n$ converges uniformly on any compact set to $\Phi$ and the $\Phi^n$ are uniformly bounded. Then, 
    \begin{equation*}
        \lim_{n}\int_E \Phi_n d\Prob_n = \int_E \Phi d\Prob.
    \end{equation*}
\end{lemma}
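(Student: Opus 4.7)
The plan is to combine tightness of the weakly convergent sequence $(\Prob_n)$ with the uniform convergence of $\Phi_n$ on compact sets and the uniform bound. The standard triangle-inequality split is
\begin{equation*}
    \Big|\int_E \Phi_n d\Prob_n - \int_E \Phi d\Prob\Big| \leq \Big|\int_E (\Phi_n - \Phi)d\Prob_n\Big| + \Big|\int_E \Phi d\Prob_n - \int_E \Phi d\Prob\Big|,
\end{equation*}
and each term will be handled separately.

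First, since $E$ is Polish and $\Prob_n \to \Prob$ weakly, Prokhorov's theorem implies that the family $\{\Prob_n\}_n \cup \{\Prob\}$ is tight: for every $\varepsilon>0$, there exists a compact $K_\varepsilon \subset E$ such that $\sup_n \Prob_n(E\setminus K_\varepsilon) \leq \varepsilon$ and $\Prob(E\setminus K_\varepsilon) \leq \varepsilon$. Denote $M \defeq \sup_n \|\Phi_n\|_\infty < +\infty$; by pointwise passage to the limit on compact subsets, $\Phi$ inherits the same bound. Since each $\Phi_n$ is continuous (as used in the application in the paper) and converges uniformly on compact sets to $\Phi$, the limit $\Phi$ is continuous on every compact $K$, hence continuous on $E$ (first countability of a metric space reduces continuity to sequential continuity on compact sets $\{x_k\}\cup\{x\}$), and bounded.

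For the first term, split the integral over $K_\varepsilon$ and its complement:
\begin{equation*}
    \Big|\int_E (\Phi_n - \Phi)d\Prob_n\Big| \leq \|\Phi_n - \Phi\|_{\infty,K_\varepsilon} + 2M\,\Prob_n(E\setminus K_\varepsilon) \leq \|\Phi_n - \Phi\|_{\infty,K_\varepsilon} + 2M\varepsilon.
\end{equation*}
By uniform convergence on $K_\varepsilon$, the first summand tends to zero as $n\to\infty$, so the lim sup of the left-hand side is at most $2M\varepsilon$. For the second term, since $\Phi$ is continuous and bounded and $\Prob_n \to \Prob$ weakly, by definition $\int_E \Phi d\Prob_n \to \int_E \Phi d\Prob$.

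Combining these two facts yields
\begin{equation*}
    \limsup_n \Big|\int_E \Phi_n d\Prob_n - \int_E \Phi d\Prob\Big| \leq 2M\varepsilon,
\end{equation*}
and letting $\varepsilon \to 0^+$ concludes the proof. The main (very mild) obstacle is justifying the tightness, which is immediate by Prokhorov in the Polish setting; everything else is a routine $\varepsilon/2$ decomposition.
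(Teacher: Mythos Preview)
Your proof is correct and follows essentially the same route as the paper: Prokhorov's theorem for tightness, the same triangle-inequality split into $\int(\Phi_n-\Phi)\,d\Prob_n$ and $\int\Phi\,d\Prob_n-\int\Phi\,d\Prob$, the compact/complement decomposition for the first term, and weak convergence for the second. You add a brief justification of the continuity of $\Phi$ that the paper leaves implicit, but otherwise the arguments coincide.
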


\begin{proof}
    Since $(\Prob_n)$ converges weakly, it is tight (from Prokhorov Theorem) thus :
    $\forall \epsilon > 0, \exists K_\epsilon$ a compact set of $E$ such that $\sup_n \Prob_n(K_\epsilon^c) \leq \epsilon$.
    
    \begin{align*}
        \int_E \Phi d\Prob - \int_E \Phi_n d\Prob^n   & = \underbrace{\int_E \Phi d\Prob -\int_E \Phi d\Prob_n}_{\to 0,\text{weak convergence}} + \int_E \Phi d\Prob_n - \int_E \Phi^n d\Prob_n \\
    \end{align*}
    
    Since the sequence is uniformly bounded, $\Phi_\infty$ is bounded and the first term tends to 0. For the second term, we note that,
    \begin{align*}
        \left|\int (\Phi - \Phi^n) d\Prob_n\right| &\leq \left|\int_{K_\epsilon} (\Phi - \Phi^n) d\Prob_n \right| + \left|\int_{K_\epsilon^c} (\Phi - \Phi^n) d\Prob_n\right|,\\ & \leq \sup_{K_\epsilon}|\Phi - \Phi^n| \Prob_n(K_\epsilon) +  \epsilon(||\Phi||_\infty +||\Phi^n||_\infty),\\
        & \leq \sup_{K_\epsilon}|\Phi - \Phi^n| +  \epsilon(||\Phi||_\infty +\sup_k||\Phi^k||_\infty).
    \end{align*}
    Taking the limit in $n$ to infinity, and then in $\epsilon$ to zero, we obtain the desired result.
\end{proof}

\begin{lemma}[Compactness Criterion]
    \label{lm:CompCritProb}
    Let $r\geq p > 0$ and $\mathcal{K} \subset \mathcal{P}_p(M) $ be s.t $\exists x_0 \in M$,
    \begin{equation*}
        \sup_{\mu \in \mathcal{K}} \int_M d^r(x_0,x) d\mu(x) < +\infty
    \end{equation*}

    Then, $\mathcal{K}$ is tight(for the weak topology). If moreoever $r>p$, then $\mathcal{K}$ is relatively compact for the $d_p$ distance.
\end{lemma}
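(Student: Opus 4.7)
The plan is to handle the two claims separately, using Markov's inequality for tightness and then upgrading weak subsequential convergence to $\mathcal{W}_p$-convergence by uniform integrability of the $p$-th power of the distance.

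For tightness, let $C \defeq \sup_{\mu \in \mathcal{K}}\int_M d^r(x_0,x)d\mu(x) < +\infty$. Since $(M,g)$ is a complete Riemannian manifold (we are working in the bounded geometry setting, so Hopf--Rinow applies), the closed balls $\overline{B(x_0,R)}$ are compact. For any $R>0$ and any $\mu \in \mathcal{K}$, Markov's inequality gives
\begin{equation*}
\mu\bigl(M\setminus B(x_0,R)\bigr) \;\leq\; \frac{1}{R^r}\int_M d^r(x_0,x)\,d\mu(x) \;\leq\; \frac{C}{R^r}.
\end{equation*}
Given $\varepsilon > 0$, choosing $R = (C/\varepsilon)^{1/r}$ gives a compact set $K_\varepsilon = \overline{B(x_0,R)}$ with $\mu(K_\varepsilon^c) \leq \varepsilon$ uniformly over $\mathcal{K}$, which is precisely tightness for the weak topology.

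For the second part, assume $r > p$. First extract a weakly convergent subsequence. By Prokhorov's theorem (applicable since $M$ is Polish), any sequence $(\mu_n)_n \subset \mathcal{K}$ has a subsequence (still denoted $\mu_n$) converging weakly to some probability $\mu$ on $M$. By Fatou applied to $d^r$, $\mu$ itself satisfies $\int_M d^r(x_0,x)d\mu \leq C$, so in particular $\mu \in \mathcal{P}_p(M)$. It remains to show the convergence in $\mathcal{W}_p$. The standard characterization is that $\mathcal{W}_p(\mu_n,\mu) \to 0$ is equivalent to weak convergence plus convergence of the $p$-th moments $\int d^p(x_0,\cdot)d\mu_n \to \int d^p(x_0,\cdot)d\mu$; so it suffices to check this moment convergence.

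The moment convergence follows from uniform integrability of $x\mapsto d^p(x_0,x)$ under $(\mu_n)$, which is where the strict inequality $r > p$ is used. For any $R>0$ and any $n$, split
\begin{equation*}
\int_M d^p(x_0,x)\,d\mu_n \;=\; \int_M \bigl(d^p \wedge R^p\bigr)\,d\mu_n \;+\; \int_{\{d(x_0,\cdot)>R\}} d^p(x_0,x)\,d\mu_n.
\end{equation*}
The tail integral is controlled by $d^p \leq d^r/R^{r-p}$ on $\{d>R\}$, giving a bound $C\, R^{p-r}$ that vanishes as $R\to\infty$ uniformly in $n$ (and the same bound holds for $\mu$). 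The truncated integrand $d^p \wedge R^p$ is bounded continuous, so by weak convergence $\int (d^p \wedge R^p)d\mu_n \to \int (d^p \wedge R^p)d\mu$. Letting $n\to\infty$ and then $R\to\infty$ yields convergence of the $p$-th moments, hence $\mathcal{W}_p(\mu_n,\mu)\to 0$, which proves relative compactness of $\mathcal{K}$ in $(\mathcal{P}_p(M),\mathcal{W}_p)$. The only non-routine point is this last uniform integrability argument; everything else is a direct application of Markov's inequality and Prokhorov's theorem.
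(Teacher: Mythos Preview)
Your proof is correct and follows essentially the same route as the paper: Markov's inequality for tightness, Prokhorov for a weakly convergent subsequence, and the tail bound $\int_{\{d>R\}} d^p\,d\mu_n \le C R^{p-r}$ to upgrade to $\mathcal{W}_p$-convergence via the standard Villani characterization. You are in fact a bit more careful than the paper in two spots---you invoke Hopf--Rinow to justify compactness of closed balls and use Fatou to check that the weak limit lies in $\mathcal{P}_p(M)$---but the argument is otherwise the same.
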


\begin{proof}
    Let $\epsilon > 0$ and $R > 0$ sufficiently large. We have for any $\mu \in \mathcal{K}$:

    \begin{equation*}
        \mu(M \symbol{92} B_R(x_0)) \leq \int_{d(x_0,x)\geq R} \frac{d(x_0,x)^r}{R^r} d\mu(x) \leq \frac{C}{R^r} < \epsilon
    \end{equation*}

    where $C = \sup_{\mu \in \mathcal{K}} \int_M d(x_0,x)^r d\mu < \infty$. So $\mathcal{K}$ is tight.
    From Prokhorov Theorem, we deduce that $\mathcal{K}$ is  sequentially compact for the weak comvergence. Let $(\mu_n)$ a sequence weakly converging to $\mu$.
    If we notice that,
    \begin{equation*}
        \int_{d(x_0,x)\geq R} d^p(x_0,x) d\nu(x) \leq \frac{1}{R^{r-p}}\int_{d(x_0,x)\geq R} d^r(x_0,x) d\nu(x) \leq \frac{C}{R^{r-p}}
    \end{equation*}

    holds for any probability in $\mathcal{K}$, in particular for the sequence. This gives us that :

    \begin{equation*}
        \lim_{R\xrightarrow[]{} \infty} \limsup_{n \xrightarrow[]{}\infty} \int_{d(x_0,x)\geq R} d^p(x_0,x) d\mu_n(x) = 0
    \end{equation*}

     and we conclude from  Theorem 7.2 p212 \cite{villani2021topics} that the sequence actually converges for the Wasserstein-p metric.
\end{proof}

Finally, we prove the following lemma required for the existence of weak flow solutions.

\begin{lemma}
    \label{lem:existenceOMrdmvar}
    Let $(M,g)$ be a complete connected n-dimensional Riemannian Manifold. Given a random variable $X_0$ defined on a probability space $(\Omega, \mathcal{F}, \Prob)$ taking values in M.
    
    Then, there exists a random variable $U_0$ on $(\Omega, \mathcal{F}, \Prob)$ taking value in the orthogonal frame bundle $OM$, such that $\pi(U_0) = X_0$, where $\pi$ is the projection on $M$.
\end{lemma}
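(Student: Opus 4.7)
The idea is that $OM \to M$ is a principal $O(n)$-bundle, hence locally trivial, so we only need a \emph{measurable} global section—continuity is not required—and such sections exist for any principal bundle over a second countable manifold by standard patching.

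The plan is as follows. First, since $M$ is a smooth $n$-manifold, it is second countable and admits a countable atlas $\{(U_i,\phi_i)\}_{i\in\mathbb{N}}$. On each $U_i$, the coordinate vector fields $\partial_1^i,\dots,\partial_n^i$ form a smooth local frame of $TM|_{U_i}$. Applying the Gram--Schmidt orthonormalization procedure pointwise with respect to $g$ (which produces smooth vector fields because $g$ is smooth and the Gram--Schmidt map is a smooth function of its inputs as long as the input frame is linearly independent, which it is by construction), we obtain smooth orthonormal frames $(E_1^i,\dots,E_n^i)$ on $U_i$, and hence smooth local sections
\begin{equation*}
    s_i:U_i\to OM,\qquad s_i(x)=(x,E_1^i(x),\dots,E_n^i(x)),\qquad \pi\circ s_i=\mathrm{id}_{U_i}.
\end{equation*}

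Next I would patch these local sections into a global Borel measurable section. Define a Borel partition of $M$ by $V_1=U_1$ and $V_i=U_i\setminus\bigcup_{j<i}U_j$ for $i\ge 2$. Each $V_i$ is a Borel subset of $M$ and the $V_i$ cover $M$ disjointly. Set
\begin{equation*}
    s:M\to OM,\qquad s(x)=s_i(x)\text{ if }x\in V_i.
\end{equation*}
This map is Borel measurable: for any open $W\subset OM$, the preimage $s^{-1}(W)=\bigcup_i (s_i^{-1}(W)\cap V_i)$ is a countable union of Borel sets, since each $s_i$ is continuous on $U_i$. By construction, $\pi\circ s=\mathrm{id}_M$.

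Finally I would set $U_0\defeq s\circ X_0$. Because $X_0:\Omega\to M$ is $\mathcal{F}$-measurable and $s:M\to OM$ is Borel measurable, the composition $U_0$ is an $\mathcal{F}$-measurable random variable with values in $OM$, and $\pi(U_0)=\pi(s(X_0))=X_0$, which is what is required. There is no real obstacle here: the only point that needs a little care is verifying that Gram--Schmidt produces smooth (or at least continuous) frames so that the patching is Borel, but this is immediate from the smoothness of $g$ and of the inverse square-root on positive reals. Completeness and connectedness of $M$ are not used in the construction.
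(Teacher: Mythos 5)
Your proof is correct, but it takes a genuinely different route from the paper. The paper treats this as a measurable selection problem: it considers the set-valued map $\omega \mapsto \pi^{-1}(X_0(\omega))$, verifies that $OM$ is a Polish space (this is where completeness and connectedness of $M$ enter) and that this map is weakly measurable, and then invokes the Kuratowski--Ryll-Nardzewski selection theorem to extract $U_0$. You instead build an explicit global Borel section $s : M \to OM$ by orthonormalizing coordinate frames on a countable atlas and patching the resulting smooth local sections over the Borel partition $V_i = U_i \setminus \bigcup_{j<i} U_j$, then set $U_0 = s \circ X_0$. Your argument is more elementary and more self-contained: it exploits the local triviality of the frame bundle to sidestep the selection theorem entirely, and as you correctly observe, it does not use completeness or connectedness of $M$ (the paper needs these only to make $OM$ Polish for the selection theorem). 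The paper's approach is the one that would generalize to situations where the fibers are not so explicitly parametrized or where no countable trivializing cover is at hand, but for the orthonormal frame bundle of a manifold your direct construction is cleaner. All the steps you flag as needing care --- smoothness of the Gram--Schmidt output, Borel measurability of the patched section via $s^{-1}(W) = \bigcup_i (s_i^{-1}(W) \cap V_i)$, and measurability of the composition --- are handled correctly.
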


\begin{proof}
    We are going to use Kuratowski and Ryll-Nardzewski measurable selection theorem. Set $\psi(\omega) = \pi^{-1}(X_0(\omega))$ is a closed non-empty subset of $OM$.

    $OM$ is a Polish space since M is connected and complete this implies that it is complete, it is metrizable with the induced metric from the metric on M, and noting that the $(\pi^{-1}(B(x,n)))_{n\in \mathbb{N} }$ are compact, where $B(x,n)$ is the ball of radius $n$ centered at some reference point $x\in M$, gives that it is locally compact and countable at infinity since the orthogonal frame bundle over a compact manifold (restriction of the manifold to the balls) is compact.

    Now take an open set $\mathcal{U}$ of $OM$, we need to show the weak measurability of $\psi$.

    \begin{align*}
        \{\omega, \psi(\omega)\cap\mathcal{U} \neq \varnothing\} & = \{\omega, \pi^{-1}(X_0(\omega))\cap\mathcal{U} \neq \varnothing \}\\
         & = \{\omega, X_0(\omega)\in \pi^{-1}(\mathcal{U})\}.
    \end{align*}

    \begin{equation*}
        \pi^{-1}(\pi(\mathcal{U})) = \bigcup_{g \in O(n)} g\mathcal{U}.
    \end{equation*}

    Right is open, so since $OM$ is equipped with the final topology, $\pi(\mathcal{U})$ is open. And thus 

    \begin{equation*}
        \{\omega, X_0(\omega)\in \pi^{-1}(\mathcal{U})\} \in \mathcal{F}.
    \end{equation*}

    Thus $\psi$ is weakly measurable, by Kuratowski theorem, there exists a measurable selection $U_0$, that is a random variable taking values in $OM$ and by construction $\pi(U_0) = X_0$.
\end{proof}

\endgroup
\newpage

\begingroup
\sloppy
\printbibliography
\endgroup

\end{document}